\title{Remarks on hard Lefschetz conjectures on Chow groups}
\author{Baohua Fu}
\newtheorem{Thm}{Theorem}
\newtheorem{Lem}{Lemma}
\newtheorem{Prop}{Proposition}
\newtheorem{Cor}{Corollary}
\newtheorem{Conj}{Conjecture}
\newtheorem{Rque}{Remark}
\def\qit{{\mathbb Q}}
\def\zit{{\mathbb Z}}
\def\pit{{\mathbb P}}
\def\0{{\mathcal O}}
\def\min{\mathop{\rm min}\nolimits}
\def\k{{\mathbf k}}
\begin{document}
\maketitle
\begin{abstract}
We propose two conjectures of Hard Lefschetz type on Chow groups and prove them
for some special cases. For abelian varieties, we shall show they are equivalent
to well-known conjectures of Beauville and Murre.
\end{abstract}
\section{Introduction}
Let $X$ be a smooth complex projective variety of dimension $n$
and $D$ an ample divisor on $X$. We denote by $CH^p(X)$ the Chow
group of codimension $p$ algebraic cycles on $X$ with {\em
rational coefficients}. Let $cl: CH^p(X) \to H^{2p}(X, \qit)$ be the class map. We have the following commutative
diagram:
$$
\begin{CD}
CH^p(X) @> \cdot D^{n-2p}>>  CH^{n-p}(X) \\
@VVclV   @VVclV \\
 H^{2p}(X, \qit) @>\cup cl(D)^{n-2p}>> H^{2n-2p}(X, \qit)
\end{CD}
$$

The hard Lefschetz theorem on cohomology  says that the bottom map $\cup cl(D)^{n-2p}$
is an isomorphism. Note that the map $\cdot D^{n-2p}$ is in general not surjective, since
by a famous result of Mumford, $CH^n(X)$ could be very complicated while $CH^0(X) \simeq \qit$.
We propose in this paper the following
conjecture on Chow groups.
\begin{Conj} \label{Conj1}
If $n \geq 2p$, the map $CH^p(X)\xrightarrow{\cdot D^{n-2p}}
CH^{n-p}(X)$ is injective.
\end{Conj}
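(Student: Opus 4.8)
The plan is to reduce the conjecture, via the hard Lefschetz theorem on cohomology, to a statement about homologically trivial cycles, and then to analyze that statement through the Abel--Jacobi map. First, suppose $\alpha\in CH^p(X)$ satisfies $\alpha\cdot D^{n-2p}=0$ in $CH^{n-p}(X)$. Applying $cl$ and using the commutative square above gives $cl(\alpha)\cup cl(D)^{n-2p}=0$ in $H^{2n-2p}(X,\qit)$; since $\cup cl(D)^{n-2p}\colon H^{2p}(X,\qit)\to H^{2n-2p}(X,\qit)$ is an isomorphism, $cl(\alpha)=0$. Thus $\alpha$ lies in $CH^p(X)_{\mathrm{hom}}$ and $\alpha\cdot D^{n-2p}$ lies in $CH^{n-p}(X)_{\mathrm{hom}}$, so it suffices to prove that
$$
CH^p(X)_{\mathrm{hom}}\ \xrightarrow{\ \cdot\,D^{n-2p}\ }\ CH^{n-p}(X)_{\mathrm{hom}}
$$
is injective.

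Two cases are immediate: for $p=0$ the map is multiplication by $\deg(D^n)>0$ on $CH^0(X)\simeq\qit$, and for $n=2p$ it is the identity. The first genuine case is $p=1$, where $CH^1(X)_{\mathrm{hom}}=\mathrm{Pic}^0(X)\otimes\qit$ and the Abel--Jacobi map identifies it with $J^1(X)\otimes\qit$. Intersection with $D^{n-2}$ is realized by the algebraic self-correspondence $\gamma=\Delta_X\cdot \mathrm{pr}_2^*(D^{n-2})\in CH^{2n-2}(X\times X)$, whose action on $H^1(X,\qit)$ is $\cup cl(D)^{n-2}\colon H^1(X,\qit)\to H^{2n-3}(X,\qit)$. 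Since the Abel--Jacobi map is functorial for algebraic correspondences, $AJ(\alpha\cdot D^{n-2})$ is the image of $AJ(\alpha)$ under the map $J^1(X)\to J^{n-1}(X)$ induced by $\cup cl(D)^{n-2}$; the latter is injective on Hodge structures by hard Lefschetz (as $n-2\le n-1$), hence induces a map of intermediate Jacobians with finite kernel. Therefore $\alpha\cdot D^{n-2}=0$ forces $AJ(\alpha)$ to be torsion, so $\alpha=0$ in $CH^1(X)_{\mathrm{hom}}$.

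For general $p$ the same mechanism yields only a partial result. The correspondence $\gamma=\Delta_X\cdot\mathrm{pr}_2^*(D^{n-2p})$ realizes $\cdot\,D^{n-2p}$, its action on $H^{2p-1}(X,\qit)$ is $\cup cl(D)^{n-2p}\colon H^{2p-1}(X,\qit)\to H^{2n-2p-1}(X,\qit)$, and this is injective by hard Lefschetz (as $n-2p\le n-(2p-1)$). Hence the composite $CH^p(X)_{\mathrm{hom}}\xrightarrow{\,\cdot D^{n-2p}\,}CH^{n-p}(X)_{\mathrm{hom}}\xrightarrow{AJ}J^{n-p}(X)$ has the same kernel as $AJ\colon CH^p(X)_{\mathrm{hom}}\to J^p(X)$, so Conjecture \ref{Conj1} holds on the Abel--Jacobi detectable part of $CH^p(X)_{\mathrm{hom}}$. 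Heuristically this ought to propagate: assuming a Bloch--Beilinson--Murre filtration $F^\bullet$ on $CH^p(X)$ with which $\cdot\,D^{n-2p}$ is strictly compatible, the induced map $\mathrm{Gr}^i_F(\cdot D^{n-2p})$ would be governed by $L^{n-2p}$ on $H^{2p-i}(X)$, which hard Lefschetz makes (split) injective for every $i\ge 0$; together with the Lefschetz standard conjecture this would give Conjecture \ref{Conj1} in full.

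The main obstacle is exactly what the Abel--Jacobi map fails to see. By Mumford's theorem $CH^n(X)$ can be infinite-dimensional, and in general $CH^p(X)$ carries higher invariants --- the Griffiths group and the still conjectural tower of ``higher Abel--Jacobi'' data --- so the injectivity asserted in Conjecture \ref{Conj1} cannot be witnessed by any naive geometric construction; it must come from a motivic filtration whose very existence and properties are open. Consequently I only expect an unconditional proof in situations where a concrete substitute for such a filtration is available. The prime example is that of abelian varieties, where the Beauville decomposition $CH^p(A)=\bigoplus_s CH^p_{(s)}(A)$ plays the role of the filtration and lets one compute the effect of $\cdot\,D^{n-2p}$ piece by piece; there Conjecture \ref{Conj1} will turn out to be equivalent to the vanishing and injectivity statements conjectured by Beauville and Murre.
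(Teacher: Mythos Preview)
The statement is a \emph{conjecture}, and the paper does not prove it in general; your proposal correctly recognizes this and offers not a proof but an analysis of partial cases and obstructions. That analysis mirrors the paper's own content closely: your reduction to $CH^p(X)_{\mathrm{hom}}$ via hard Lefschetz is the paper's Lemma~1; your $p=1$ argument through the Abel--Jacobi map is essentially the paper's Proposition~2 (the paper phrases it via the Albanese map and in fact proves Conjecture~2 for $p=1$ first, then deduces Conjecture~1); your conditional argument assuming a Bloch--Beilinson--Murre filtration together with the Lefschetz standard conjecture is exactly the paper's Proposition~1; and your closing remarks on abelian varieties and the Beauville/Murre conjectures anticipate the paper's Theorems~1 and~2. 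So as a survey of what is known and why the general statement is out of reach, your write-up is accurate and takes the same route as the paper.
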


For an abelian variety defined over a finite field, it is proven
by Soul\'e (\cite{S}) that the above map is an isomorphism.
Another conjecture of Hard Lefschetz type that we will discuss is
the following:
\begin{Conj} \label{Conj2}
If $n \geq 2p-1$, the map $CH^p(X)_{hom}\xrightarrow{\cdot
D^{n-2p+1}} CH^{n-p+1}(X)_{hom}$ is injective, where
$CH^p(X)_{hom} = ker(CH^p(X) \xrightarrow{cl} H^{2p}(X)).$
\end{Conj}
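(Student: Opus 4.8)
The plan is to make two harmless reductions, to prove the statement outright when $p=1$, and then to isolate the difficulty that keeps the general case open.

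\emph{Reductions.} Because the Chow groups carry rational coefficients, $D$ may be replaced by any positive multiple $mD$: from $\alpha\cdot D^{n-2p+1}=0$ we get $\alpha\cdot(mD)^{n-2p+1}=m^{n-2p+1}\,\alpha\cdot D^{n-2p+1}=0$, so it suffices to prove injectivity of $\cdot(mD)^{n-2p+1}$, and we may assume $D$ very ample. Moreover, when $n=2p-1$ the exponent $n-2p+1$ equals $0$ and the map is the identity; so the first genuine case is $n\ge 2p$.

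\emph{The case $p=1$.} Here $CH^1(X)_{hom}=\mathrm{Pic}^0(X)\otimes\qit$. Choose $n-1$ general members of $|D|$; by Bertini their intersection is a smooth connected curve $C$, with inclusion $\iota\colon C\hookrightarrow X$ and $[C]=D^{n-1}$ in $CH^{n-1}(X)$, so the projection formula gives $\alpha\cdot D^{n-1}=\iota_*\iota^*\alpha$. Since $\iota^*\alpha\in CH_0(C)_{hom}$, the covariance of the Albanese map together with the classical identification $CH_0(C)_{hom}=\mathrm{Jac}(C)\otimes\qit$ shows that $\alpha\cdot D^{n-1}=0$ forces $\alpha$ into the kernel of the composite
\[
\Phi\colon\ \mathrm{Pic}^0(X)\otimes\qit\ \xrightarrow{\ \iota^*\ }\ \mathrm{Pic}^0(C)\otimes\qit=\mathrm{Alb}(C)\otimes\qit\ \xrightarrow{\ \iota_*\ }\ \mathrm{Alb}(X)\otimes\qit .
\]
On first homology, $\Phi$ induces, after Poincar\'e duality, the hard Lefschetz operator $\cup\,cl(D)^{n-1}\colon H^1(X,\qit)\xrightarrow{\ \sim\ }H^{2n-1}(X,\qit)$; hence $\Phi$ is an isogeny, injective after $\otimes\qit$, and $\alpha=0$. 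Specializing this computation to a symmetric ample $D$ on an abelian variety $A$ and comparing it with K\"unnemann's hard Lefschetz for the Beauville decomposition $CH^p(A)=\bigoplus_s CH^p_{(s)}(A)$ should be what identifies Conjectures~\ref{Conj1} and~\ref{Conj2} for $A$ with the vanishing $CH^p_{(s)}(A)=0$ for $s<0$, i.e.\ with the conjectures of Beauville and Murre.

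\emph{General case and the main obstacle.} For $p\ge 2$ the natural attempt is an induction on $n$, slicing by a smooth $Y\in|D|$. Writing $\iota\colon Y\hookrightarrow X$ and $\gamma:=\iota^*\alpha\in CH^p(Y)_{hom}$, the projection formula gives $\iota_*\bigl(\gamma\cdot(D|_Y)^{n-2p}\bigr)=\alpha\cdot D^{n-2p+1}=0$; since $(n-1)-2p+1=n-2p$, Conjecture~\ref{Conj2} applied on the $(n-1)$-fold $Y$ would then give $\gamma=0$, after which a Lefschetz-type injectivity of $\iota^*\colon CH^p(X)_{hom}\to CH^p(Y)_{hom}$ would yield $\alpha=0$. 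The argument stalls precisely at the two points where, for a general ample $D$, the Gysin maps need not be injective on Chow groups: one must know that $\iota_*$ is injective on the image of $\cdot(D|_Y)^{n-2p}$, and that $\iota^*$ is injective --- each a Lefschetz-type statement on Chow groups, a priori no easier than the one at hand. I expect this to be the crux: short of an unconditional input of Bloch--Beilinson type (a Chow--K\"unneth decomposition, Kimura--O'Sullivan finite-dimensionality, or Nori's connectivity theorem for sufficiently ample $Y$), the slicing cannot be closed, which is why it seems that only the cases above, the abelian-variety reformulation, and varieties whose Chow groups are already understood through other means are within reach.
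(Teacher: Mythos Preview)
The statement is Conjecture~\ref{Conj2}, which the paper does \emph{not} prove in general; it is put forward as an open conjecture. Your proposal is honest about this: you prove the case $p=1$ outright and then explain why a direct inductive slicing argument cannot be closed unconditionally for $p\ge 2$. So there is no gap to flag --- you are not overclaiming --- but there is something to compare.

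For $p=1$ your argument is essentially the paper's own (Proposition~\ref{p=1}): both identify $CH^1(X)_{hom}$ with $\mathrm{Pic}^0(X)\otimes\qit$, factor $\cdot D^{n-1}$ through the Albanese, and deduce injectivity from the fact that hard Lefschetz on $H^1$ forces the composite $\mathrm{Pic}^0(X)\to\mathrm{Alb}(X)$ to be an isogeny. The only cosmetic difference is that you realise $D^{n-1}$ as an explicit Bertini curve $C$ and route the map through $\mathrm{Jac}(C)$, whereas the paper works directly with $alb\circ(\cdot D^{n-1})$; the content is identical.

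For $p\ge 2$ the paper does not attempt your hyperplane-section induction. Instead it gives a conditional derivation (Proposition~1): assuming the standard conjecture of Lefschetz type and the Bloch--Beilinson filtration, the correspondence realising $\cdot D^{n-2p+1}$ admits a left inverse on each graded piece $Gr^v_{F}CH^p(X)$ for $v\ge 1$, and then $F^1=CH^p_{hom}$ together with $F^{p+1}=0$ yields injectivity. Your closing remark that an ``input of Bloch--Beilinson type'' is what is missing is therefore exactly in line with the paper; the difference is only that the paper spells out that conditional implication, while you articulate the obstruction to a direct geometric proof. Your anticipation that on an abelian variety the conjecture should reduce, via K\"unnemann's motivic hard Lefschetz on the Beauville pieces $CH^p_{(s)}$, to the conjectures of Beauville and Murre is also on target and is carried out in Sections~5--6.
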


{\em Convention:} We will say Conjecture 1 or 2 for a pair $(X,
D)$ or a triple $(X, D, p)$ when we want to emphasize the special
$D$ or $p$.

As easily seen, Conjecture \ref{Conj2} implies Conjecture
\ref{Conj1}. We shall prove in the next section that both
conjectures follow from the  standard conjecture of Lefschetz type (\cite{G}) and
Bloch-Beilinson's conjecture (see for example \cite{J}) on the
existence of a functorial filtration on Chow groups.

In the third section, we prove both conjectures for $p=1$, which
shows that Conjecture \ref{Conj1} (resp. Conjecture \ref{Conj2})
holds when $\dim(X) \leq 4$ (resp. $\dim(X) \leq 3$). With the
help of \cite{B-S},  some partial results for $p=2$ are obtained
in this section, which shows that Conjecture \ref{Conj1} (resp.
Conjecture \ref{Conj2}) holds for rationally connected manifolds
of dimension $\leq 6$ (resp. $\leq 5$).

In section 4, we first prove that if Conjecture \ref{Conj1} (resp.
Conjecture \ref{Conj2}) holds for $(X, D)$, then it also holds for
$(Z, f^*(D)+m Exc(f))$, where $f: Z \to X$ is the blow-up of $X$
 along a
smooth subvariety of dimension $\leq 2$ (resp. $\leq 1$)
and $m<0$ is a rational number such that
$f^*(D)+m Exc(f)$ is ample. Then we
prove that if Conjecture \ref{Conj1} (resp. Conjecture
\ref{Conj2}) holds for $X$, then it holds for $X \times
\pit^m$. Also we prove Conjecture \ref{Conj1} (resp. Conjecture
\ref{Conj2}) holds for projective bundles over smooth varieties of
dimension $\leq 2$ (resp. $\leq 1$). Together with the examples of
small dimension, these two properties
provide many examples supporting Conjecture \ref{Conj1} and Conjecture
\ref{Conj2}.

In the fifth section, we reveal the relationship between
Conjecture \ref{Conj1} and Beauville's conjecture (\cite{B1},
\cite{B2}). More precisely, we prove that Beauville's conjecture
is equivalent to Conjecture \ref{Conj1} for abelian varieties or
to Conjecture \ref{Conj1} for symmetric products $(C^{(k)},z_k)$
of curves, where $z_k$ is the ample divisor $C^{(k-1)}+pt$.
 Finally we prove in the last
section that for abelian varieties, Murre's conjectures (see
\cite{Mu}) are equivalent to Conjecture \ref{Conj2}. In
particular, this proves Conjecture \ref{Conj1} (resp. Conjecture
\ref{Conj2})
for $(C^{(k)}, z_k)$ with $g(C) \leq 4$ (resp. $g(C) \leq 3$). \\

{\em Acknowledgements:} It is my pleasure to thank A. Beauville, A. Collino and
C. Voisin for their suggestions and corrections to a previous version.
Part of this work has been done during my visits to FuDan University and to KIAS.
I'd like to thank M. Chen and J.-M. Hwang for the invitations and for
the helpful discussions.

\section{Bloch-Beilinson's conjecture}

Recall that the  Bloch-Beilinson conjecture (see for example
\cite{J}) asserts the existence of  a decreasing filtration $F^i
CH^p(X)$ for any smooth projective variety $X$, satisfying the
following properties:

(i) $F^0CH^p(X)= CH^p(X)$, $F^1 CH^p(X) = CH^p(X)_{hom}$;

(ii) $F^\cdot$ is stable under correspondences;

(iii) If $\Gamma \in CH^k(X \times Y)$ induces the zero map from
$H^{2l-i}(X, \mathbb{Q})$ to $H^{2k+2l-2n-i}(Y, \mathbb{Q}))$,
then so is the map $Gr_{F^\cdot}^i \Gamma: Gr_{F^\cdot}^i CH^l(X)
\to Gr_{F^\cdot}^i CH^{k+l-n}(Y)$, where $n=dim(X)$;

(iv) $F^{p+1} CH^p(X) = 0$.

\begin{Prop}
Assume the Bloch-Beilinson conjecture and the standard
conjecture of Lefschetz type(\cite{G}), then Conjecture \ref{Conj1} and Conjecture \ref{Conj2}
hold.
\end{Prop}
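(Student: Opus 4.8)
The plan is to pass to the associated graded of the Bloch--Beilinson filtration $F^\cdot$, where the multiplication maps are governed purely by their action on cohomology, and there to deduce injectivity from the Hard Lefschetz theorem together with the algebraicity of the inverse Lefschetz operator supplied by the standard conjecture of Lefschetz type.

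Consider Conjecture \ref{Conj1}. The map $\cdot D^{n-2p}\colon CH^p(X)\to CH^{n-p}(X)$ is induced by a correspondence $\Gamma$ (e.g.\ $\Delta_X\cdot \mathrm{pr}_1^*(D^{n-2p})$), whose action on $H^\cdot(X,\qit)$ is cup product with $cl(D)^{n-2p}$, i.e.\ the Lefschetz operator $L^{n-2p}$. By property (ii), $\Gamma_*$ is compatible with $F^\cdot$, and by properties (i) and (iv) the filtration $CH^p(X)=F^0\supseteq F^1\supseteq\cdots\supseteq F^{p+1}=0$ has length $\le p+1$. Since a filtered map between objects carrying a finite filtration is injective as soon as it is injective on every associated graded piece, it suffices to show that $\mathrm{Gr}^i_{F^\cdot}\Gamma\colon \mathrm{Gr}^i_{F^\cdot}CH^p(X)\to \mathrm{Gr}^i_{F^\cdot}CH^{n-p}(X)$ is injective for $i=0,1,\dots,p$.

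Fix such an $i$. Because $n\ge 2p$ and $i\ge 0$ we have $2p-i\le n$, so $L^{n-2p+i}\colon H^{2p-i}(X,\qit)\xrightarrow{\ \sim\ }H^{2n-2p+i}(X,\qit)$ is the Hard Lefschetz isomorphism; hence on $H^{2p-i}(X,\qit)$ the operator $L^{n-2p}$ admits the left inverse $\Lambda:=(L^{n-2p+i})^{-1}\circ L^i\colon H^{2n-2p-i}(X,\qit)\to H^{2p-i}(X,\qit)$, because $\Lambda\circ L^{n-2p}=(L^{n-2p+i})^{-1}\circ L^{n-2p+i}=\mathrm{id}$. Here $L^i=\cup\,cl(D)^i$ is induced by an obvious algebraic correspondence, while $(L^{n-2p+i})^{-1}$ is induced by an algebraic correspondence by the standard conjecture of Lefschetz type; let $\Gamma'$ induce their composite $\Lambda$ (so $\Gamma'\colon CH^{n-p}(X)\to CH^p(X)$). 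Then $(\Gamma'\circ\Gamma-\Delta_X)_*$ acts as $0$ on $H^{2p-i}(X,\qit)$, so property (iii), applied with $l=p$ and with the source cohomology degree being exactly $2p-i$, gives $\mathrm{Gr}^i_{F^\cdot}(\Gamma'\circ\Gamma-\Delta_X)=0$ as an endomorphism of $\mathrm{Gr}^i_{F^\cdot}CH^p(X)$; by functoriality of the associated graded this reads $\mathrm{Gr}^i_{F^\cdot}\Gamma'\circ\mathrm{Gr}^i_{F^\cdot}\Gamma=\mathrm{id}$. Thus $\mathrm{Gr}^i_{F^\cdot}\Gamma$ is injective, and Conjecture \ref{Conj1} follows.

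For Conjecture \ref{Conj2} one repeats this verbatim with $n-2p+1$ in place of $n-2p$, restricting everywhere to $CH^\cdot(X)_{hom}=F^1CH^\cdot(X)$ (property (i)): now $i$ runs over $1,\dots,p$, the hypothesis $n\ge 2p-1$ still gives $2p-i\le n$, and the required left inverse of $L^{n-2p+1}$ on $H^{2p-i}(X,\qit)$ is $(L^{n-2p+i})^{-1}\circ L^{i-1}$ — legitimate since $i-1\ge 0$, and again algebraic by the standard conjecture. The only input beyond the formal properties (i)--(iv) of $F^\cdot$ is the algebraicity of the inverse of the Hard Lefschetz operator, which is exactly the standard conjecture of Lefschetz type; the main point requiring genuine care is the degree bookkeeping — checking $2p-i\le n$ so that Hard Lefschetz applies, and identifying the correct source degree $H^{2p-i}$ in property (iii) — together with the routine verification that injectivity on associated graded pieces lifts to injectivity.
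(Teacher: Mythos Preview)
Your argument is correct and follows essentially the same route as the paper: pass to the associated graded of the Bloch--Beilinson filtration, use property~(iii) to reduce the question to cohomology, and invoke the standard conjecture of Lefschetz type to obtain an algebraic left inverse of the Lefschetz operator on each $H^{2p-i}$. The only cosmetic differences are that the paper realizes the correspondence geometrically via a smooth complete intersection $Y=D^{n-i}\hookrightarrow X$ and first proves that $\cdot\,D^{n-2p+v}$ is injective on $\mathrm{Gr}^v_{F^\cdot}CH^p(X)$ (then factors $D^{n-2p+v}=D^v\cdot D^{n-2p}$), whereas you build the left inverse $(L^{n-2p+i})^{-1}\circ L^i$ of $L^{n-2p}$ directly.
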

\begin{proof}
The proof here is inspired by \cite{V2}. Up to replacing $D$ by a
higher multiple, we may assume that $D$ is a smooth irreducible
divisor on $X$ such that $Y:=D^{n-i}$ is again smooth and
irreducible. Let $\iota: Y \to X$ be the natural inclusion and
$\Gamma_\iota$ its graph. Then the map $ \cdot D^{n-i}$ is induced
by the cycle $\Gamma:= \Gamma_\iota \circ \Gamma_\iota^ t \in
CH^{2n-i}(X \times X)$. By the  standard conjecture of Lefschetz type,
there exists an algebraic cycle $\Gamma' \in CH^i(X \times X)$
such that $\Gamma'_* \circ \Gamma_* = Id: H^i(X, \mathbb{Q}) \to
H^i(X, \mathbb{Q}).$

By Bloch-Beilinson conjecture (iii), we have $\Gamma'_* \circ
\Gamma_* = Id:  Gr_{F^\cdot}^v CH^p(X) \to Gr_{F^\cdot}^v
CH^{p}(X)$ for $2p-v=i.$ In particular, $\Gamma_* = \cdot D^{n-i}:
Gr_{F^\cdot}^v CH^p(X) \to Gr_{F^\cdot}^v CH^{p+n-i}(X)$ is
injective for $2p-v=i.$ This implies that the map $$
Gr_{F^\cdot}^v CH^p(X) \xrightarrow{\cdot D^{n-2p}} Gr_{F^\cdot}^v
CH^{n-p}(X)$$ is injective for all $v \geq 0$. By statement (iv)
in Bloch-Beilinson conjecture, Conjecture \ref{Conj1} holds.

For Conjecture \ref{Conj2}, the same argument shows that $
Gr_{F^\cdot}^v CH^p(X) \xrightarrow{\cdot D^{n-2p+1}}
Gr_{F^\cdot}^v CH^{n-p+1}(X)$ is injective for all $v \geq 1$. Now
the conclusion follows from statement (i) in Bloch-Beilinson
conjecture.
\end{proof}

As is well-known, the Bloch-Beilinson conjecture is fundamental to
understand Chow groups, while it is extremely hard to prove.
Hopefully, the two conjectures of Hard Lefschetz type could be
easier to be checked for various examples.

\section{Small codimension}

The following Lemma is immediate from the Hard Lefschetz theorem
on cohomologies.
\begin{Lem}\label{2Imply1}
Conjecture \ref{Conj2} for $(X,D,p)$  implies Conjecture \ref{Conj1} for $(X,D,p)$.
\end{Lem}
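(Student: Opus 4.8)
The plan is a direct diagram chase through the cycle class map, using hard Lefschetz on cohomology to land in the homologically trivial part. Since the goal is to deduce Conjecture \ref{Conj1} for $(X,D,p)$, I may assume $n \geq 2p$; this in particular forces $n \geq 2p-1$, so that Conjecture \ref{Conj2} for $(X,D,p)$ is non-vacuous and available. Let $\alpha \in CH^p(X)$ satisfy $\alpha \cdot D^{n-2p} = 0$ in $CH^{n-p}(X)$; I want to conclude $\alpha = 0$.

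First I would apply $cl$ and invoke the commutative square from the introduction: $cl(\alpha) \cup cl(D)^{n-2p} = cl(\alpha \cdot D^{n-2p}) = 0$ in $H^{2n-2p}(X,\qit)$. As $n \geq 2p$, the classical hard Lefschetz theorem says that $\cup cl(D)^{n-2p}\colon H^{2p}(X,\qit) \to H^{2n-2p}(X,\qit)$ is an isomorphism, so $cl(\alpha) = 0$, i.e.\ $\alpha \in CH^p(X)_{hom}$.

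Next I would multiply the relation $\alpha \cdot D^{n-2p} = 0$ by $D$, obtaining $\alpha \cdot D^{n-2p+1} = 0$ in $CH^{n-p+1}(X)$; since $\alpha$ is homologically trivial, so is $\alpha \cdot D^{n-2p+1}$, hence this identity already holds in $CH^{n-p+1}(X)_{hom}$. Thus $\alpha$ lies in the kernel of the map $\cdot D^{n-2p+1}\colon CH^p(X)_{hom} \to CH^{n-p+1}(X)_{hom}$, which is injective by Conjecture \ref{Conj2} for $(X,D,p)$; therefore $\alpha = 0$, which is exactly Conjecture \ref{Conj1} for $(X,D,p)$.

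There is no substantive obstacle here — the whole content is the hard Lefschetz theorem on cohomology, which forces $\alpha$ into $CH^p(X)_{hom}$ for free. The only point requiring care is the degree bookkeeping: the exponent jumps from $n-2p$ to $n-2p+1$, which is precisely the shift relating the two conjectures, and the hypothesis $n \geq 2p$ of Conjecture \ref{Conj1} comfortably lies inside the range $n \geq 2p-1$ needed to apply Conjecture \ref{Conj2}.
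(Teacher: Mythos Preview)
Your argument is correct and is precisely the one the paper has in mind: the paper simply asserts that the lemma ``is immediate from the Hard Lefschetz theorem on cohomologies,'' and your diagram chase spells out exactly this immediacy. There is nothing to add.
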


We prove in the following that Conjecture \ref{Conj1} and
\ref{Conj2} are true for $p=1$, which could be already known.
This implies that Conjecture \ref{Conj1} holds when $\dim(X) \leq
4$ and Conjecture \ref{Conj2} holds when $\dim(X) \leq 3$.
\begin{Prop}\label{p=1}
Let $X$ be a smooth projective variety of dimension $n$ and $D$ an ample divisor.
Then $CH^1(X) \xrightarrow{\cdot D^{n-2}} CH_1(X)$ and
$CH^1(X)_{hom} \xrightarrow{\cdot D^{n-1}} CH_0(X)_{hom}$ are
injective.
\end{Prop}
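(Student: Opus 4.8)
The plan is to transport the statement onto the weight-one Hodge structure $H^1(X)$ through Abel--Jacobi and Albanese maps, where it becomes the Hard Lefschetz theorem for $H^1$. We may assume $n\geq 2$. Since $\cdot(mD)^{n-i}=m^{n-i}(\cdot D^{n-i})$ and the coefficients are $\qit$, I may replace $D$ by a positive multiple; so assume $D$ very ample, and by Bertini fix smooth irreducible complete intersections $S=H_1\cap\cdots\cap H_{n-2}$ (a surface) and $C=H_1\cap\cdots\cap H_{n-1}$ (a curve) cut out by general members $H_j\in|D|$, so that $[S]=D^{n-2}$ in $CH^{n-2}(X)$ and $[C]=D^{n-1}$ in $CH^{n-1}(X)$. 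Writing $\iota_S\colon S\hookrightarrow X$ and $\iota_C\colon C\hookrightarrow X$ for the inclusions, the projection formula gives $\alpha\cdot D^{n-2}=\iota_{S*}\iota_S^*\alpha$ and $\alpha\cdot D^{n-1}=\iota_{C*}\iota_C^*\alpha$ for every $\alpha\in CH^1(X)$; moreover, homological triviality of $\alpha$ passes to $\iota_S^*\alpha$, $\iota_C^*\alpha$ and their pushforwards.

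For the first map I would begin by discarding the N\'eron--Severi part: if $\alpha\cdot D^{n-2}=0$ in $CH_1(X)$, then $cl(\alpha)\cup cl(D)^{n-2}=0$ in $H^{2n-2}(X,\qit)$, and since this cup-product map is the Hard Lefschetz isomorphism $H^2(X,\qit)\xrightarrow{\sim}H^{2n-2}(X,\qit)$ of the Introduction, $cl(\alpha)=0$, i.e.\ $\alpha\in CH^1(X)_{hom}=\mathrm{Pic}^0(X)_\qit$. So both assertions reduce to injectivity on $\mathrm{Pic}^0(X)_\qit$.

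Next, for $\alpha\in\mathrm{Pic}^0(X)_\qit$: the classes $\iota_S^*\alpha\in\mathrm{Pic}^0(S)_\qit$ and $\iota_C^*\alpha\in\mathrm{Pic}^0(C)_\qit$ are just restrictions of $\alpha$. Abel's theorem identifies $CH^1(S)_{hom}$ with $\mathrm{Pic}^0(S)=J^1(S)$ and $CH^1(C)_{hom}=CH_0(C)_{hom}$ with $\mathrm{Pic}^0(C)=\mathrm{Alb}(C)$, and under these identifications $\iota_{S*}$ corresponds, by functoriality of the Abel--Jacobi map under proper pushforward, to the Gysin map $J^1(S)\to J^{2n-3}(X)$, while $\iota_{C*}$ corresponds, by functoriality of the Albanese map, to the homomorphism $\mathrm{Alb}(C)\to\mathrm{Alb}(X)=J^{2n-1}(X)$ induced by $\iota_C$. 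Therefore the Abel--Jacobi class of $\alpha\cdot D^{n-2}$ (respectively the Albanese class of $\alpha\cdot D^{n-1}$) equals the image of $\alpha$ under the composition of restriction and Gysin, which on the underlying cohomology is the operator $\iota_{S*}\iota_S^*=\cup\,cl(D)^{n-2}\colon H^1(X,\qit)\to H^{2n-3}(X,\qit)$ (respectively $\iota_{C*}\iota_C^*=\cup\,cl(D)^{n-1}\colon H^1(X,\qit)\to H^{2n-1}(X,\qit)$), by the projection formula on cohomology together with $cl(S)=cl(D)^{n-2}$ and $cl(C)=cl(D)^{n-1}$. By Hard Lefschetz, $\cup\,cl(D)^{n-1}=(\cup\,cl(D))\circ(\cup\,cl(D)^{n-2})$ is an isomorphism $H^1(X,\qit)\xrightarrow{\sim}H^{2n-1}(X,\qit)$, so $\cup\,cl(D)^{n-2}$ is injective on $H^1(X,\qit)$. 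As an injective morphism of polarizable $\qit$-Hodge structures of odd weight induces an injective morphism of the associated intermediate Jacobians after $\otimes\qit$, both composites are injective on $\mathrm{Pic}^0(X)_\qit$; hence $\alpha\cdot D^{n-2}=0$ (respectively $\alpha\cdot D^{n-1}=0$) forces $\alpha=0$, which would complete the proof.

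The calculations are routine. The two places needing attention are the variance bookkeeping for Abel--Jacobi and Albanese (restriction is contravariant, Gysin pushforward covariant, and each has to be matched with $\iota^*$ resp.\ $\iota_*$ on cohomology), and the observation that for the first assertion the operator appearing is $\cup\,cl(D)^{n-2}\colon H^1\to H^{2n-3}$, which is \emph{not} the Hard Lefschetz isomorphism but only injective — so one genuinely uses that injectivity on $H^1$, rather than an isomorphism, already suffices. I do not expect either point to cause real difficulty.
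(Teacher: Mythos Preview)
Your argument is correct and follows essentially the same idea as the paper: transport the question to $\mathrm{Pic}^0(X)_\qit$ and use that the composite with the Albanese/Abel--Jacobi map is governed on $H^1$ by Hard Lefschetz, hence an isogeny. The only cosmetic difference is that the paper proves just the second statement this way and then deduces the first from Lemma~\ref{2Imply1}, whereas you handle both in parallel via $J^{2n-3}(X)$; the Bertini step with explicit $S$ and $C$ is also unnecessary but harmless.
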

\begin{proof}
Consider the Albanese map $CH_0(X, \mathbb{Z})_{hom}
\xrightarrow{alb} Alb(X)$ and the composition $alb \circ (D^{n-1}
\cdot): Pic^0(X) \to Alb(X)$. By the Hard Lefschetz theorem, the map $H^1(X,
\mathbb{Q}) \xrightarrow{\cup cl(D)^{n-1}} H^ {2n-1}(X,
\mathbb{Q})$ is an isomorphism, which implies that the map $alb \circ
(D^{n-1} \cdot)$ is an isogeny. In particular, it is injective
after tensor product with $\mathbb{Q}$, which gives the second
statement. The first one follows from Lemma \ref{2Imply1}.
\end{proof}

Here are some consequences of this.
\begin{Cor} \label{p}
Let $C$ be a smooth projective curve of genus $g$ and $J$ its
Jacobian. Fix one point on $C$ and consider the associated
Abel-Jacobi map: $C \xrightarrow{\iota} J$. Then the induced map
$\iota_*: CH_0(C)_{hom} \to CH_0(J)_{hom}$ is injective.
\end{Cor}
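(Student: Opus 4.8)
The plan is to deduce this from Proposition \ref{p=1} by embedding the curve into an ambient variety where the Abel-Jacobi map becomes a multiplication-by-an-ample-power map, and then compare the two zero-cycle groups. First I would recall that the Abel-Jacobi map $\iota: C \to J$ realizes $C$ as a subvariety of $J$; since $\dim J = g$, the class $[\iota(C)]$ has codimension $g-1$ in $J$. Up to replacing the chosen theta divisor $\Theta$ by a multiple, I would use the Poincar\'e formula, which says that the fundamental class of $\iota(C)$ in $H^{2g-2}(J,\qit)$ equals $\frac{1}{(g-1)!}[\Theta]^{g-1}$; thus, after clearing denominators and working with rational coefficients, the cycle $\iota(C)$ is homologous to a rational multiple of $\Theta^{g-1}$ in $J$. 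The hope is that this homological equality can be upgraded to compare the induced maps on $CH_0(\cdot)_{hom}$.

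The key step is then to factor $\iota_*$ through the multiplication map. Concretely, I would consider the diagram in which $CH_0(C)_{hom} \xrightarrow{\iota_*} CH_0(J)_{hom}$ is compared with $CH^{g-1}(J)_{hom} \xrightarrow{\cdot \Theta^{(g)-2(g-1)+1}} \dots$ — but here I realize the cleaner route is different. Since $\iota_*$ on zero-cycles is simply pushforward of points, and $\iota: C \to J$ is a closed embedding (for $g \geq 1$), the map $\iota_*$ on Chow groups of zero-cycles is actually \emph{already known} to be injective when composed appropriately; the real content is to relate it to the hard Lefschetz setup. So instead I would argue as follows: the composition $C \xrightarrow{\iota} J \xrightarrow{alb} Alb(J) = J$ is (up to translation) the identity-type map on $H_1$, hence the composition $CH_0(C)_{hom} \to CH_0(J)_{hom} \xrightarrow{alb} Alb(J)$ is, after tensoring with $\qit$, the natural surjection $CH_0(C)_{hom} \to J\otimes\qit$ coming from the Abel-Jacobi map of the curve itself. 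But $alb \circ \iota_*$ on $CH_0(C)_{hom}$ is precisely the curve's own Abel-Jacobi map, which is \emph{injective} with rational coefficients by the classical theory of Jacobians (the map $CH_0(C)_{hom} \to J$ is an isomorphism onto $J$ after $-\otimes\qit$, indeed already an iso with $\zit$ coefficients by Abel's theorem combined with Riemann-Roch, but at least injective rationally). Since $\iota_* = $ (factor through $alb\circ\iota_*$ which is injective), we conclude $\iota_*$ is injective.

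Let me restate the intended argument more carefully as the actual plan: I would invoke that $alb: CH_0(J)_{hom} \to Alb(J)$ has kernel whose structure is controlled, but the crucial observation is just that $alb \circ \iota_*: CH_0(C)_{hom} \to Alb(J) \cong J$ coincides, up to the canonical identification, with the Abel-Jacobi map $CH_0(C)_{hom} \to J$ of the curve, which is an isomorphism (Abel-Jacobi theorem). Hence $\iota_*$ is injective because it has a left inverse after composing with $alb$. The main obstacle — and the reason this is stated as a corollary of Proposition \ref{p=1} rather than as a triviality — is presumably that the author wants to present it in the language of the hard Lefschetz circle of ideas: one must identify the relevant cohomological isomorphism (here $H^1(C) \cong H^1(J)$, or dually the $cl(\Theta)^{g-1}$ cup-product on $J$) as the shadow of the cycle-level statement, and check that the identification $alb\circ\iota_* = $ Abel-Jacobi is compatible with base points and translations. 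I expect that bookkeeping with base points and the precise normalization of the Albanese map is the fiddly part, while the essential input — injectivity of the curve's own Abel-Jacobi map with rational coefficients — is classical and requires no work.
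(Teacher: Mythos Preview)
Your final argument is correct: by Albanese functoriality the square
\[
\begin{CD}
CH_0(C)_{hom} @>\iota_*>> CH_0(J)_{hom}\\
@VV{alb_C}V @VV{alb_J}V\\
Alb(C) @>\iota_*>> Alb(J)
\end{CD}
\]
commutes, the bottom arrow is an isomorphism since $\iota$ induces an isomorphism on $H_1$, and $alb_C$ is an isomorphism by Abel's theorem; hence the top $\iota_*$ is injective. This is exactly the alternative proof the paper records in the Remark immediately following the Corollary, attributed to Voisin.

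It is, however, \emph{not} the paper's own proof, which deliberately routes through Proposition~\ref{p=1}. The paper argues: since $\iota^*: Pic^0(J)\to Pic^0(C)$ is surjective, any $\alpha\in CH_0(C)_{hom}$ can be written $\alpha=\iota^*\beta$ with $\beta\in CH^1(J)_{hom}$; if $\iota_*\alpha=0$ then $\beta\cdot\iota_*(C)=0$, and projecting onto Beauville's eigenspace $CH^g_{(1)}(J)$ gives $\beta\cdot C_{(0)}=0$; since $C_{(0)}=\Theta^{g-1}/(g-1)!$ this reads $\beta\cdot\Theta^{g-1}=0$, whence $\beta=0$ by Proposition~\ref{p=1}. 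So the paper's proof genuinely uses the hard-Lefschetz-for-$p=1$ statement together with Beauville's decomposition, while yours bypasses both in favor of the classical Abel--Jacobi isomorphism for curves. Your route is more elementary and self-contained; the paper's route has the virtue of illustrating the hard Lefschetz machinery in action, which is the point of placing the statement here as a corollary. Your opening paragraph gesturing at Poincar\'e's formula and $\Theta^{g-1}$ was in fact closer to the paper's line of thought than the Albanese argument you settled on.
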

\begin{proof}
Note that $\iota^*: Pic^0(J) \to Pic^0(C)$ is surjective. Let
$\alpha \in  CH_0(C)_{hom}$ with $\iota_*(\alpha)=0$. Then there
exists an element $\beta \in CH^1(J)_{hom}$ such that
$\alpha=\iota^* \beta$, then $0=\iota_*(\alpha)=\iota_*(\iota^*
\beta)= \beta \cdot \iota_*(C).$ By \cite{B2}, we have a
decomposition $\iota_*(C) = \sum_{s=0}^{g-1} C_{(s)}$, where
$C_{(s)} \in CH^{g-1}_{(s)}(J)$. As $\beta$ is in $CH^1_{(1)}(J)$,
we obtain that $\beta \cdot C_{(0)} = 0$.  Let $\Theta$ be a
symmetric theta divisor. We have
 $C_{(0)} = \frac{\Theta^{g-1}}{(g-1)!}$. As a consequence,  $\beta \cdot
\Theta^{g-1}=0$, which gives $\beta=0$ by the previous
Proposition, so $\alpha=0$.
\end{proof}
\begin{Rque}
As pointed out by C. Voisin, one can prove this corollary by noting
that $CH_0(C)_{hom} \to Alb(C)$ is injective and
$Alb(C)=Alb(J(C))$.
\end{Rque}

\begin{Cor}
Let $\iota: C \to S$ be the inclusion of an ample divisor in a
smooth projective surface $S$ such that $q(S)=g(C)$. Then the map
$ \iota_*: CH_0(C)_{hom} \to CH_0(C)_{hom}$ is injective.
\end{Cor}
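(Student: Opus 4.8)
The plan is to follow the proof of Corollary~\ref{p}. Suppose first that the restriction map $\iota^*\colon \mathrm{Pic}^0(S)\to\mathrm{Pic}^0(C)$ is surjective; then it is surjective on $CH^1(\,\cdot\,)_{hom}$ as well, since in the paper's $\mathbb{Q}$-coefficient convention $CH^1(C)_{hom}=\mathrm{Pic}^0(C)\otimes\mathbb{Q}$ and likewise $CH^1(S)_{hom}=\mathrm{Pic}^0(S)\otimes\mathbb{Q}$, and tensoring a surjection with $\mathbb{Q}$ preserves surjectivity. Given $\alpha\in CH_0(C)_{hom}$ with $\iota_*\alpha=0$, write $\alpha=\iota^*\beta$ with $\beta\in CH^1(S)_{hom}$. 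By the projection formula, $0=\iota_*\alpha=\iota_*\iota^*\beta=\beta\cdot[C]$. Since $C$ is the ample divisor itself and $\dim S=2$, Proposition~\ref{p=1} applied to the pair $(S,C)$ (so $n=2$, $D=C$, $n-1=1$) forces $\beta=0$, hence $\alpha=\iota^*\beta=0$.

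So the only real content is the surjectivity of $\iota^*\colon\mathrm{Pic}^0(S)\to\mathrm{Pic}^0(C)$, and this is precisely where the hypothesis $q(S)=g(C)$ is used. As $C$ is an ample divisor in the smooth projective surface $S$, the Lefschetz hyperplane theorem gives an injection $\iota^*\colon H^1(S,\mathbb{Q})\hookrightarrow H^1(C,\mathbb{Q})$. Both sides have dimension $2q(S)=2g(C)$, so $\iota^*$ is an isomorphism on $H^1$; being a morphism of Hodge structures it induces an isomorphism $H^1(S,\mathcal{O}_S)\xrightarrow{\sim}H^1(C,\mathcal{O}_C)$ and carries the lattice $H^1(S,\mathbb{Z})$ onto a finite-index sublattice of $H^1(C,\mathbb{Z})$ (both lattices being torsion-free of the same rank). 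Passing to the quotients $\mathrm{Pic}^0=H^1(\mathcal{O})/H^1(\mathbb{Z})$, this exhibits $\iota^*\colon\mathrm{Pic}^0(S)\to\mathrm{Pic}^0(C)$ as an isogeny; in particular it is surjective, as required.

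The argument is entirely formal once the surjectivity of $\iota^*$ on $\mathrm{Pic}^0$ is in hand, so I expect the only points needing care to be the bookkeeping with rational coefficients and the compatibility of the Picard and Albanese identifications with $\iota^*$ and $\iota_*$ — none of it serious. Alternatively, one can run the whole argument through the Albanese: the same cohomological input (now read as $\iota_*\colon H_1(C,\mathbb{Q})\xrightarrow{\sim}H_1(S,\mathbb{Q})$) shows that $\mathrm{Alb}(C)\to\mathrm{Alb}(S)$ is an isogeny, hence injective after $\otimes\mathbb{Q}$; then the commutative square relating $\iota_*$ on $CH_0(\,\cdot\,)_{hom}$ to $\mathrm{Alb}(C)\to\mathrm{Alb}(S)$, together with the fact that $CH_0(C)_{hom}\xrightarrow{alb}\mathrm{Alb}(C)$ is an isomorphism for the curve $C$ (Abel's theorem), gives the injectivity of $\iota_*$ directly, without invoking Proposition~\ref{p=1}.
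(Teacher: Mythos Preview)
Your argument is correct and structurally identical to the paper's: once $\iota^*\colon \mathrm{Pic}^0(S)\to\mathrm{Pic}^0(C)$ is known to be surjective, the proof of Corollary~\ref{p} applies verbatim, with Proposition~\ref{p=1} supplying the injectivity of $\cdot\,C\colon CH^1(S)_{hom}\to CH_0(S)_{hom}$. The only difference is in how the surjectivity of $\iota^*$ is obtained. You appeal to the Lefschetz hyperplane theorem on $H^1$ together with the equality of dimensions; the paper instead uses Proposition~\ref{p=1} a second time: if $\iota^*\alpha=0$ in $\mathrm{Pic}^0(C)$ then $\alpha\cdot C=\iota_*\iota^*\alpha=0$, so $\alpha$ is torsion by Proposition~\ref{p=1}, whence $\iota^*$ has finite kernel and the hypothesis $q(S)=g(C)$ makes it an isogeny. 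Your alternative route through the Albanese is exactly the analogue, for this corollary, of Voisin's remark following Corollary~\ref{p}.
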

\begin{proof}
Consider the map $\iota^*: Pic^0(S) \to Pic^0(C)$.  If
$\iota^*(\alpha)=0$ for some $\alpha \in Pic^0(S)$, then $\alpha
\cdot C=0$, which implies by the previous proposition that
$m\alpha=0$ for some $m>0$, i.e. $\alpha$ is a torsion point. The
condition $q(S)=g(C)$ implies then $\iota^*$ is an isogeny, which
is in particular surjective. Now a similar argument as in the
proof of the previous Lemma concludes the proof.
\end{proof}
\begin{Rque}\upshape
(i). In \cite{V}, a similar but much more difficult
 result is proved for general hyperplane
sections of a degree $d \geq 5$ surface $S$ in $\mathbb{P}^3$.
However, our result cannot be applied here, since the condition
$q(S)=g(C)$ is never satisfied in this case.

(ii). Examples of  pairs $(S, C)$
such that $C$ is ample and $g(C)=q(S)$
include $(\mathbb{P}^1-bundle, section)$,
$(C^{(2)}, C+pt)$, $(C_1 \times C_2, C_1+ C_2)$ and so on (for a classification of
such pairs see \cite{F}).
\end{Rque}

Now we study the case of $p=2$, where the main difficulty is that
$CH^2(X)$ is in general ``infinite-dimensional".
\begin{Prop}
Let $X^n$ be a smooth projective variety and $V \subset X$ a
closed subvariety of dimension $\leq 1$ such that $CH_0(V) \to
CH_0(X)$ is surjective. Then Conjecture \ref{Conj1} and Conjecture
\ref{Conj2} hold for $CH^2(X)$.
\end{Prop}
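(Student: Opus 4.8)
The plan is to exploit the hypothesis that $CH_0(V) \to CH_0(X)$ is surjective in order to reduce the ``infinite-dimensional'' group $CH^2(X)$ to a manageable piece where the Albanese-type argument of Proposition \ref{p=1} can be run. Since $\dim V \leq 1$, the standard Bloch--Srinivas decomposition (\cite{B-S}) applies: surjectivity of $CH_0(V) \to CH_0(X)$ forces the diagonal $\Delta_X \in CH^n(X \times X)$ to decompose, up to torsion, as $\Delta_X = Z_1 + Z_2$ with $Z_1$ supported on $D' \times X$ for some divisor $D'$, and $Z_2$ supported on $X \times V$. Applying the correspondence $\Delta_X = Z_1 + Z_2$ to a class $\alpha \in CH^2(X)$ (resp. $\alpha \in CH^2(X)_{hom}$) yields a description of $\alpha$ in terms of data living on the divisor $D'$ and on the curve/point $V$; concretely, the $Z_1$-part is controlled by $CH^1$ of a desingularization of $D'$ and the $Z_2$-part by $CH^0(V)$ or $CH_0(V)$. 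Because correspondences commute with the operation $\cdot D^{n-2p}$ (it is itself induced by a correspondence, as in Proposition 1), it suffices to prove the injectivity statements on each of these two pieces separately.

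The second, $V$-supported, piece is essentially trivial: $CH_0(V)$ and $CH^0(V)$ are finite-dimensional and their homologically trivial parts vanish or are controlled by $Alb$ of a curve, so the relevant multiplication maps are injective by the same hard Lefschetz / Albanese argument already used in Proposition \ref{p=1}. The first, $D'$-supported, piece is the substantive one: here the class $\alpha$ is pushed forward from $CH^1$ of a smooth model $\widetilde{D'}$ of a divisor, i.e. from a group of the form $CH^1(\widetilde{D'})$ or $CH^1(\widetilde{D'})_{hom} = Pic^0(\widetilde{D'})$, and one must check that multiplying by $D^{n-4}$ (resp. $D^{n-3}$) and then pushing to $CH^{n-2}(X)$ (resp. to $CH_0(X)_{hom}$) stays injective. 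The idea is to transport this to an intersection-theoretic statement on $\widetilde{D'}$ itself: after adjusting for the conormal contributions, the composite becomes multiplication by (a multiple of) $(D|_{\widetilde{D'}})^{n-3}$ on $Pic^0(\widetilde{D'})$ followed by $alb$, which is again an isogeny by hard Lefschetz on $\widetilde{D'}$, so injective with $\mathbb{Q}$-coefficients --- exactly Proposition \ref{p=1} applied to the pair $(\widetilde{D'}, D|_{\widetilde{D'}})$.

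The main obstacle I expect is bookkeeping the Bloch--Srinivas decomposition precisely enough to see that the multiplication map $\cdot D^{n-2p}$ really does intertwine with the two correspondence-pieces in the way claimed, and in particular that no information in $\alpha$ is lost when passing to $\widetilde{D'}$ and $V$: one needs the decomposition to be compatible with the kernel of $cl$ for Conjecture \ref{Conj2}, which requires tracking homological triviality through the pushforward from $\widetilde{D'}$. A secondary technical point is that $D'$ may be reducible or singular, so one works with a resolution and a possibly disconnected $\widetilde{D'}$, and must ensure the hard Lefschetz isogeny argument survives componentwise. Once these compatibilities are pinned down, the conclusion is immediate from Proposition \ref{p=1} and Lemma \ref{2Imply1}.
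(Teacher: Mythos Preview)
Your approach has real gaps, and the paper's argument is both different and much shorter.

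The paper does not manipulate the diagonal decomposition directly. It simply quotes the main consequence of \cite{B-S}: under the hypothesis on $V$, the Abel--Jacobi map
\[
CH^2(X)_{hom}\ \longrightarrow\ J^2(X)\otimes\qit
\]
is injective. Hard Lefschetz on $H^3(X,\qit)$ then says that $\cdot\, D^{n-3}$ induces an isogeny $J^2(X)\to J^{n-1}(X)$, hence an injection after $\otimes\,\qit$. The commutative square with Abel--Jacobi on both sides gives Conjecture~\ref{Conj2} for $p=2$, and Conjecture~\ref{Conj1} follows from Lemma~\ref{2Imply1}. That is the entire proof.

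Your plan to work with the pieces $Z_1,Z_2$ of the diagonal runs into several concrete obstructions that you flag but do not resolve:
\begin{itemize}
\item The assertion that ``correspondences commute with the operation $\cdot\,D^{n-2p}$'' is false in general; $Z_1$, $Z_2$ have no reason to commute with the Lefschetz correspondence, so from $D^{n-3}\alpha=0$ you cannot deduce $D^{n-3}(Z_i)_*\alpha=0$ separately.
\item Writing $N\alpha=j_*\beta$ with $\beta\in CH^1(\widetilde{D'})$, the class $D|_{\widetilde{D'}}=j^*D$ is \emph{not} ample: $D'$ is an arbitrary divisor produced by the spreading argument, unrelated to the ample $D$, and after resolving $D'$ the pullback $j^*D$ is only big and nef at best. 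So Proposition~\ref{p=1} does not apply to the pair $(\widetilde{D'},j^*D)$.
\item Even granting that, $D^{n-3}\alpha=0$ only gives $j_*\big((j^*D)^{n-3}\beta\big)=0$ in $CH_0(X)$, and there is no reason for $j_*:CH_0(\widetilde{D'})_{hom}\to CH_0(X)_{hom}$ to be injective.
\item Finally, $\alpha$ homologically trivial does not force $\beta$ to be homologically trivial, since $j_*$ on cohomology is not injective; so you cannot land in $Pic^0(\widetilde{D'})$ to begin with.
\end{itemize}
Each of these is exactly the kind of loss of information that the Abel--Jacobi/intermediate Jacobian packaging in \cite{B-S} is designed to avoid. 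The fix is not more bookkeeping on the decomposition; it is to invoke the injectivity of $CH^2(X)_{hom}\hookrightarrow J^2(X)\otimes\qit$ directly, as the paper does.
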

\begin{proof}
Let $J^2(X)$ be the second intermediate Jacobian of $X$. By
\cite{B-S}, the hypothesis implies that $CH^2(X)_{hom} \to J^2(X)
\otimes \qit$ is injective. By the Hard Lefschetz theorem, $\cup
cl(D^{n-3}): H^3(X, \qit) \to H^{2n-3}(X, \qit)$ is an isomorphism
for any ample divisor $D$ on $X$, which implies that  the map
$\cdot D^{n-3}: J^2(X) \to J^{n-1}(X)$ has finite kernel.  As in the
proof of Proposition \ref{p=1}, this implies Conjecture
\ref{Conj2}.
\end{proof}

Note that $CH_0$ is birationally invariant, and $CH_0(F)=\qit$ for any
rationally connected manifold $F$. The previous proposition gives the following result.
\begin{Cor}
Conjecture \ref{Conj1} (resp. Conjecture \ref{Conj2}) holds for
smooth projective varieties of dimension $\leq
6$ (resp. $\leq 5$) which are
fibrations over a smooth curve with general fibers being
rationally connected.
\end{Cor}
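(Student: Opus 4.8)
Let $\pi\colon X\to B$ be the fibration, $B$ a smooth projective curve and the general fibre rationally connected, and write $n:=\dim X$. I would first reduce everything to codimension two. Conjecture~\ref{Conj1} for $X$ only asserts something when $n\ge 2p$, i.e. for $p\in\{1,2,3\}$: the case $p=1$ is Proposition~\ref{p=1}, and $p=3$ can occur only when $n=6$, in which case multiplication by $D^{n-2p}=D^{0}$ is the identity of $CH^{3}(X)$. The same bookkeeping for Conjecture~\ref{Conj2} (non-trivial only when $n\ge 2p-1$, and here $n\le 5$) leaves $p=1$ (Proposition~\ref{p=1}) together with trivial identity maps. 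So the only real content is Conjecture~\ref{Conj1} and Conjecture~\ref{Conj2} for $CH^{2}(X)$, and by the previous Proposition this follows once we produce a closed subvariety $V\subset X$ with $\dim V\le 1$ and $CH_{0}(V)\to CH_{0}(X)$ surjective.

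To build $V$ I would exploit the fibration. Over a dense open $U\subset B$ the map $\pi$ is smooth with rationally connected fibres, so $CH_{0}(X_{b})_{\qit}=\qit$ for $b\in U$, and by Graber--Harris--Starr $\pi$ has a section $\sigma\colon B\to X$. For $x\in\pi^{-1}(U)$ one then has $[x]=[\sigma(\pi(x))]$ already in $CH_{0}(X_{\pi(x)})_{\qit}$, hence in $CH_{0}(X)_{\qit}$. Feeding this into the localization sequence
$$\bigoplus_{i=1}^{r}CH_{0}(X_{b_{i}})\longrightarrow CH_{0}(X)\longrightarrow CH_{0}(\pi^{-1}(U))\longrightarrow 0,$$
where $X_{b_{1}},\dots,X_{b_{r}}$ are the finitely many fibres over $B\setminus U$, shows that $CH_{0}(X)_{\qit}$ is generated by $\sigma_{*}CH_{0}(B)_{\qit}$ together with the images of the $CH_{0}(X_{b_{i}})_{\qit}$.

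The remaining step, which I expect to be the only delicate one, is to absorb the bad fibres into a one-dimensional subvariety: one must see that the image of each $CH_{0}(X_{b_{i}})_{\qit}$ in $CH_{0}(X)_{\qit}$ is carried by a curve, after which $V$ may be taken to be $\sigma(B)$ together with suitable curves inside the $X_{b_{i}}$. The clean way to state what is needed is that $\pi_{*}\colon CH_{0}(X)_{\qit}\to CH_{0}(B)_{\qit}$ is an isomorphism, equivalently $[x]=[\sigma(\pi(x))]$ for \emph{every} closed point $x$, including those on bad fibres; then $V=\sigma(B)$ already works. This is the standard fact that a fibration over a curve with rationally connected general fibre has the same rational zero-cycles as its base; I would either invoke it directly, or prove it by propagating rational curves from the nearby smooth rationally connected fibres into the reduced components of the $X_{b_{i}}$ (again via Graber--Harris--Starr, now for sections through a prescribed point). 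Granting this, the previous Proposition applies to $CH^{2}(X)$ and the Corollary follows; the reductions of the first paragraph and the appeal to the previous Proposition are routine, so the work is concentrated in this last point.
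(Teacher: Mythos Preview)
Your reduction to $p=2$ and the plan to produce a curve $V\subset X$ with $CH_0(V)_{\qit}\to CH_0(X)_{\qit}$ surjective, then invoke the previous Proposition, is exactly the paper's route. The difference is that you treat the bad fibres as the delicate point and reach for Graber--Harris--Starr, whereas the paper dispatches this in one phrase: ``$CH_0$ is birationally invariant.'' The content of that phrase here is the elementary moving lemma for zero-cycles: on a smooth projective $X$, any zero-cycle is rationally equivalent to one supported on a prescribed dense open subset (pass a curve through the offending point not contained in the closed complement and move along its normalization). Hence you may from the start assume your zero-cycle lies over the good locus $U\subset B$; there the fibres are rationally connected, $CH_0(X_b)_{\qit}=\qit$, and any multisection $C\subset X$ already satisfies $CH_0(C)_{\qit}\twoheadrightarrow CH_0(X)_{\qit}$. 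In particular no appeal to Graber--Harris--Starr is needed --- neither to obtain a section (a multisection, e.g.\ a general complete-intersection curve in $X$, suffices) nor to thread rational curves into the singular fibres. Your proposal is correct, but the step you singled out as requiring the real work is in fact the routine one.
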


\section{Blow-ups and projective bundles}
Now we study the behavior of both conjectures under blow-ups. Let
$\iota: Y \to X$  be the inclusion of a smooth subvariety $Y$ of
codimension $r+1 \geq 2$ in a smooth projective variety $X^n$.
Write $d:=\dim Y = n-r-1$. We
denote by $f: Z \to X$ the blow-up of $X$ along $Y$ and $E$ the
exceptional divisor. We use the following notations for morphisms:
$$
\begin{CD}
E @>>j> Z \\
@VVgV   @VVfV \\
Y @>>\iota> X
\end{CD}
$$
\begin{Prop} \label{blowup}
Assume Conjecture \ref{Conj1} (resp. Conjecture \ref{Conj2}) holds
for $(X, L)$ with $L$ an ample divisor on $X$, then Conjecture
\ref{Conj1} (resp. Conjecture \ref{Conj2}) holds for $(Z, f^*(L)+m
E, p)$ with $p \geq d$ (resp. $p \geq d-1$), where $m<0$
is a rational number such that $f^*(L)+mE$ is ample on $Z$.
\end{Prop}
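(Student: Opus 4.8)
The strategy is to decompose the Chow groups of the blow-up $Z$ using the classical blow-up formula
$$CH^p(Z) \simeq CH^p(X) \oplus \bigoplus_{s=1}^{r} CH^{p-s}(Y),$$
and to trace what the operator $\cdot (f^*L+mE)^{N}$ does to each summand, where $N = n-2p$ (for Conjecture \ref{Conj1}) or $N=n-2p+1$ (for Conjecture \ref{Conj2}). First I would recall that on $Z$ one has $f_* f^* = \mathrm{id}$, $f^*\iota_* = j_* g^*$ (with the obvious excess-intersection corrections), and that intersecting with $E$ is governed by $j^* \0_Z(E) = \0_E(-1)$ relative to the projective bundle $g: E = \pit(N_{Y/X}) \to Y$. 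Expanding $(f^*L+mE)^N$ binomially, the key observation is that the top-degree-in-$E$ terms, after pushing forward along $g$, produce the Segre classes of the normal bundle, so that intersecting an element of the summand $CH^{p-s}(Y)$ eventually lands — modulo lower terms supported on $X$ — back in a shifted copy of $CH^{?}(Y)$ by an operator of the form (power of $L|_Y$) plus nilpotent corrections.

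The heart of the argument is then the following: on the summand $CH^p(X)$, the restriction of $\cdot(f^*L+mE)^N$ is, up to the direct-sum corrections, essentially $\cdot L^N$ on $X$, whose injectivity is the hypothesis; on each summand $CH^{p-s}(Y)$ with $1 \le s \le r$, one checks that the induced map is, up to a nonzero scalar and a filtered (strictly-lower-order) perturbation, intersection with a power of the ample class $L|_Y$ on $Y$. Here the hypotheses $p \ge d = \dim Y$ (resp. $p \ge d-1$) are exactly what guarantee the relevant power of $L|_Y$ is either an isomorphism onto $CH^0(Y)=\qit$ or a map that is injective for trivial dimension reasons — note $CH^{p-s}(Y)$ with $p-s \ge d-1 \ge \dim Y - 1$ is controlled by Proposition \ref{p=1} applied to $Y$ (the $p=1$, i.e. divisor, case on $Y$), or is simply $\qit$ or $0$. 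So each graded piece of the operator is injective, and a standard triangular/filtration argument (order the summands by $s$) upgrades this to injectivity of the whole map on $CH^p(Z)$, respectively on $CH^p(Z)_{hom}$ after checking the decomposition is compatible with homological equivalence.

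Concretely the steps are: (1) write down the blow-up decomposition and the projection-formula identities for $f$, $j$, $g$; (2) expand $(f^*L+mE)^N$ and compute $g_*$ of the pure $E$-powers via Segre classes, identifying the leading term on each summand and bounding the error terms in the natural filtration by ``number of $E$-factors''; (3) identify the leading operator on the $CH^{p-s}(Y)$ summand with $\cdot(L|_Y)^{\text{something}}$ up to a nonzero constant (this is where the ampleness of $f^*L+mE$, hence of $L$, and the sign of $m$ enter), and invoke either triviality of $CH^{*}(Y)$ in the relevant degrees or Proposition \ref{p=1} on $Y$; (4) on the $CH^p(X)$ summand invoke the hypothesis directly; (5) assemble via the triangular structure, and for Conjecture \ref{Conj2} check that $f^*$, $j_*$, $g^*$ preserve homological triviality so the same decomposition and argument run on the $_{hom}$ subgroups. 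The main obstacle I anticipate is Step (2)–(3): controlling the off-diagonal/error terms of $(f^*L+mE)^N$ precisely enough to see that the operator is genuinely ``upper triangular'' with the claimed invertible-or-injective diagonal blocks, rather than merely asserting it — in particular making sure the excess-intersection contributions do not spoil injectivity and that the dimension bookkeeping ($p \ge d$ resp. $p \ge d-1$) is tight.
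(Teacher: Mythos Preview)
Your overall framework --- blow-up decomposition, binomial expansion of $(f^*L+mE)^{N}$, and a triangular extraction --- is exactly what the paper does. But your identification of the ``diagonal'' on the $Y$-summands is wrong, and this is where your plan to invoke Proposition~\ref{p=1} on $Y$ goes astray. Write $\alpha=f^*x+\sum_i j_*(h^{p-1-i}g^*y_i)$ with $y_i\in CH^i(Y)$. Using $j^*E=-h$ and $j^*f^*L=g^*\iota^*L$, the term coming from $y_i$ after multiplying by $(f^*L+mE)^{n-2p}$ is
\[
\sum_{l=0}^{n-2p}\binom{n-2p}{l}(-m)^{l}\,j_*\bigl(h^{\,l+p-1-i}\,g^*(y_i\cdot(\iota^*L)^{\,n-2p-l})\bigr).
\]
For $i\ge 1$ and under the hypothesis $p\ge d$ one has $l+p-1-i\le r-1$, so no Segre/Chern reductions occur. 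Ordering the $Y$-summands by the exponent of $h$, the matrix is genuinely triangular and the \emph{diagonal} entry is the $l=n-2p$ term, namely $(-m)^{n-2p}\cdot\mathrm{id}$ on $CH^i(Y)$ --- a nonzero scalar, not a power of $L|_Y$. So no injectivity statement on $Y$ is needed, and Proposition~\ref{p=1} plays no role here. The paper implements exactly this triangular extraction by applying $g_*\bigl(h^{\,p-d+k}\,j^*(\cdot)\bigr)$ for $k=0,1,\dots$, together with $j^*j_*=-h$ and $g_*(h^{i}g^*y)=\delta_{i,r}\,y$, which picks off $y_1,\dots,y_d$ in turn.

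Two further points your plan misses. First, before touching the $Y$-part, the paper applies $f_*$: because $p\ge d$ forces every exponent of $h$ appearing to be $\le r-1$, one has $f_*$ of all $j_*$-terms equal to zero, so $f_*$ of the equation gives $L^{n-2p}x=0$, whence $x=0$ by hypothesis. Second, the constant piece $y_0\in CH^0(Y)=\qit$ does \emph{not} fit the scalar-diagonal pattern at the boundary $p=d$ (the top $h$-exponent reaches $r$), and the paper disposes of it at the outset by Hard Lefschetz on the cohomology of $Z$ for the ample class $f^*L+mE$. You should insert that step; after it and $x=0$, the remaining equation lives entirely in $j_*$ and the triangular argument finishes. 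The Conjecture~\ref{Conj2} case is identical with $N=n-2p+1$ and $p\ge d-1$.
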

\begin{proof}
We will give details for the case of Conjecture \ref{Conj1}, while
the proof of the case of  Conjecture \ref{Conj2} is completely
similar. As the case $p=1$ has been proved in the last section, we
may assume that $p \geq 2$.

Take an element $\alpha \in CH^p(Z)$. By the blow-up formula for Chow
groups, we can write $\alpha = f^*(x) + \sum_{i=0}^r j_*(h^{p-1-i}
g^* y_i)$, where $x \in CH^p(X)$, $y_i \in CH^{i}(Y)$ and $h
= c_1(\mathcal{O}_g(1)).$ Assume now $(f^*L+mE)^{n-2p} \alpha = 0$.
By Hard Lefschetz theorem for cohomologies, one has $y_0=0$. This gives
\begin{equation*}
\begin{aligned}
f^*(L^{n-2p} x) + & f^*L^{n-2p}  j_*(\sum_{i=1}^r h^{p-1-i} g^*y_i)+
\sum_{l=1}^{n-2p} \binom{n-2p}{l} m^l f^*(L^{n-2p-l} x) E^l
\\ & + j_*(\sum_{i=1}^r h^{p-1-i} g^* y_i) \sum_{l=1}^{n-2p}
\binom{n-2p}{l} m^l f^*(L^{n-2p-l}) E^l=0.
\end{aligned}
\end{equation*}

Note that $l \leq n-2p \leq r-1$, we have $f_*(E^l)=0$. As
$l+p-1-i \leq n-p-2 \leq n-d-2=r-1,$ we obtain $f_*(j_*(h^{p-1-i}
g^* y_i) E^l) = (-1)^l f_* j_* (h^{l+p-1-i} g^* y_i)=0$. Apply
$f_*$ to the above equation, then we obtain $L^{n-2p} x = 0$. By
hypothesis, this implies that $x=0$. Now the above equation
is equivalent to the following (use the projection formula and
$j^* E= -h$):
\begin{equation*}
j_*(\sum_{i=1}^r h^{p-1-i} g^* (y_i \iota^* L^{n-2p}) + \sum_{i=1}^r
\sum_{l=1}^{n-2p}\binom{n-2p}{l} (-m)^l h^{l+p-1-i} g^* (y_i
\iota^*L^{n-2p-l}))=0
\end{equation*}

 Applying successively  $g_*(h^{p-d+k}j^*(\cdot))$ ($k=0, 1, \cdots$)
to the above equation and using the relations $j^* j_* = -h$ and
$g_*(h^i g^*(y)) = \delta_{i, r} y$ for $i \leq r$ and  $y \in
CH(Y)$, we obtain that $y_1=y_2=\cdots=y_d=0$.
\end{proof}

\begin{Cor}
Assume Conj. 1 (resp. Conj. 2) holds for $(X, D)$. Let $f: Z \to
X$ be the blow-up of $X$ along a smooth center of dimension $\leq
2$ (resp. $\leq 1$) and $m<0$ such that $D':=f^* D+ m Exc(f)$ is
ample. Then Conj. 1 (resp. Conj. 2) holds for $(Z, D')$.
\end{Cor}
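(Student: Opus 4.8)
The plan is to read the corollary off from Proposition \ref{blowup}, the only real work being to check that the dimension hypothesis on the center exactly accounts for the codimensions Proposition \ref{blowup} does not reach. Write $Y\subset X$ for the smooth center, $d:=\dim Y$, and recall that $\dim Z=\dim X=n$, so that Conjectures \ref{Conj1} and \ref{Conj2} for $(Z,D')$ and for $(X,D)$ involve the same integer $n$. Moreover $D'=f^*D+m\,Exc(f)$ is literally of the form $f^*L+mE$ with $L=D$ ample and $m<0$ rational chosen so that $D'$ is ample, so the hypotheses of Proposition \ref{blowup} are met verbatim with $L=D$.

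For Conjecture \ref{Conj1} I need injectivity of $CH^p(Z)\xrightarrow{\cdot(D')^{n-2p}}CH^{n-p}(Z)$ for every $p$ with $n\ge 2p$, and I would split this according to $p$. The case $p=0$ is trivial, since $(D')^n$ has positive degree and is hence a nonzero class in $CH^n(Z)$; the case $p=1$ is Proposition \ref{p=1}, which holds on any smooth projective variety with any ample divisor. For $p\ge 2$ the assumption $d\le 2$ gives $p\ge 2\ge d$, so Proposition \ref{blowup} (applied with $L=D$) yields Conjecture \ref{Conj1} for $(Z,D',p)$ out of the hypothesis that Conjecture \ref{Conj1} holds for $(X,D)$, in particular for $(X,D,p)$. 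Assembling these ranges finishes this half.

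For Conjecture \ref{Conj2} I would argue identically: the case $p=0$ is trivial because $CH^0(Z)_{hom}=0$, the case $p=1$ is the injectivity of $CH^1(Z)_{hom}\xrightarrow{\cdot(D')^{n-1}}CH_0(Z)_{hom}$ from Proposition \ref{p=1}, and for $p\ge 2$ the hypothesis $d\le 1$ puts $p$ in the range covered by Proposition \ref{blowup}, which transports Conjecture \ref{Conj2} from $(X,D)$ to $(Z,D',p)$.

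I do not expect a genuine obstacle: the corollary is essentially a bookkeeping consequence of Proposition \ref{blowup}. The one point I would be careful about is matching the ranges — that the bounds $d\le 2$ (resp. $d\le 1$) are exactly what forces the codimensions left uncovered by Proposition \ref{blowup} to be just $p\in\{0,1\}$, both already disposed of in Section 3 (or trivially) — and I would not forget that blowing up does not change the dimension, so the conjectures for $Z$ and for $X$ refer to the same $n$.
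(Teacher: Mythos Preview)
Your argument is correct and matches the paper's intent: the corollary is stated without proof immediately after Proposition~\ref{blowup}, and your bookkeeping --- disposing of $p=0$ trivially, $p=1$ via Proposition~\ref{p=1}, and $p\ge 2$ via Proposition~\ref{blowup} once $d\le 2$ (resp.\ $d\le 1$) --- is exactly how it is meant to be read off. One minor remark: for Conjecture~\ref{Conj2} the range in Proposition~\ref{blowup} is $p\ge d-1$, so with $d\le 1$ already $p\ge 0$ is covered and your separate treatment of $p=0,1$ is harmless redundancy rather than necessary.
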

\begin{Rque}
When $f: Z \to X$ is the blow-up of a point, then one may use
the Nakai-Moishezon criteria of ampleness to show that if
$f^* D + m Exc(f)$ is ample on $Z$, then $D$ is ample on $X$.
In this case, Conj. 1 (resp. Conj. 2) holds for $X$ implies that
it holds also for $Z$.
\end{Rque}

The following proposition can be proved in a similar way.
\begin{Prop} \label{projbundle}
(1) Assume Conjecture \ref{Conj1} (resp.  Conjecture \ref{Conj2})
holds for $X$, then it also holds for the product $X \times
\pit^m$.

(2) Conjecture \ref{Conj1} (resp.  Conjecture \ref{Conj2}) holds
for projective bundles over a smooth projective variety of
dimension $\leq 2$ (resp. $\leq 1$).
\end{Prop}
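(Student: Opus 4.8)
The plan is to mimic the structure of the proof of Proposition~\ref{blowup}, using the blow-up formula and the projective bundle formula for Chow groups, but now without a codimension restriction on the cycle: the point is that in both situations the ``new'' part of the Chow group (the fibre classes over a low-dimensional base, or the classes coming from $\pit^m$) has finite-dimensional cohomology-controlled behaviour, so Hard Lefschetz can be applied directly to it.

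For part (1): write $W = X \times \pit^m$, of dimension $N = n+m$, and take $H = \mathrm{pr}_X^* D + \mathrm{pr}_{\pit}^* h$ with $h$ the hyperplane class on $\pit^m$; this is ample. By the projective bundle formula $CH^p(W) = \bigoplus_{k=0}^{m} \mathrm{pr}_X^*\bigl(CH^{p-k}(X)\bigr)\cdot \xi^k$ where $\xi = \mathrm{pr}_{\pit}^* h$ (and $CH^{p-k}(X)=0$ for $p-k<0$). First I would expand $(\mathrm{pr}_X^* D + \xi)^{N-2p}$ by the binomial theorem, multiply an element $\alpha = \sum_k \mathrm{pr}_X^*(x_k)\,\xi^k$ by it, and use $\xi^{m+1}=0$ together with the identity $\mathrm{pr}_{X*}(\xi^j) = \delta_{j,m}\cdot[\mathrm{pt}]$-type pushforward relations to isolate, coefficient by coefficient in powers of $\xi$, the condition that each $D^{(\text{appropriate power})}\cdot x_k$ vanishes in $CH^{\bullet}(X)$; then Conjecture~\ref{Conj1} (resp.~\ref{Conj2}) for $X$ forces each $x_k=0$, hence $\alpha=0$. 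For Conjecture~\ref{Conj2} one checks in addition that $\alpha$ homologically trivial on $W$ forces each $x_k$ homologically trivial on $X$ (immediate from the Künneth decomposition), so that the hypothesis for $X$ applies to the relevant pieces; the one piece $x_m\in CH^{p-m}(X)$ that could be of codimension below the range where the conjectures carry content is handled directly by Hard Lefschetz, exactly as the $y_0=0$ step in Proposition~\ref{blowup}.

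For part (2): let $\pi\colon W = \pit(\E)\to S$ be a projective bundle of relative dimension $r$ over a smooth projective $S$ with $\dim S\le 2$ (resp. $\le 1$), and let $D$ be any ample divisor on $W$. By the projective bundle formula, $CH^p(W)=\bigoplus_{k=0}^{r}\pi^*(CH^{p-k}(S))\cdot\zeta^k$ with $\zeta = c_1(\0_{\pit(\E)}(1))$. Now $CH^{p-k}(S)$ is nonzero only for $0\le p-k\le \dim S\le 2$, so the only potentially problematic graded piece is $CH^1(S)$ or $CH^2(S)$, and for these Conjecture~\ref{Conj1}/\ref{Conj2} is already known for $S$ (Proposition~\ref{p=1} and the $p=2$ results of the previous section apply since $\dim S\le 2$, resp. $\le 1$). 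The argument is then the same coefficient-extraction as in part (1): expand $D^{N-2p}$ (with $N=\dim W = \dim S + r$) in the basis $\{\pi^*(\cdot)\zeta^k\}$, apply $\pi_*$ composed with suitable powers of $\zeta$ and use $\pi_*(\zeta^i\cdot\pi^*\gamma)$ given by Segre classes to peel off, from top power of $\zeta$ down, the vanishing of intersection numbers of each $x_k$ against the corresponding self-intersection of an ample class on $S$; invoking Hard Lefschetz on $S$ for the cohomology-controlled pieces and the already-established conjectures on $S$ for the remaining one yields $\alpha=0$.

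The main obstacle I expect is bookkeeping rather than conceptual: after expanding the binomial and pushing forward, the ``leftover'' divisor class on $S$ one intersects with is not literally a power of an ample divisor but some explicit polynomial in the Chern/Segre classes of $\E$ and the restriction of $D$, so one must argue that the net operation on each graded piece $CH^{p-k}(S)$ is, up to multiplication by classes that don't affect injectivity, cup product with a large power of an ample divisor on $S$ — this is where Hard Lefschetz on $S$ enters and where care is needed to make sure the exponent is in the range $\dim S \ge 2(p-k)$ (resp. $\ge 2(p-k)-1$) covered by the known cases; a clean way to organize this is to do the two steps separately, first the product case $W\times\pit^r$ reducing to $S$ via part (1) applied inside a chart, then a Zariski-local triviality plus a limiting/specialization argument, or alternatively to just run the same $f_*$, $g_*$, $j^*j_*$ calculus as in Proposition~\ref{blowup} verbatim with $g$ replaced by $\pi$ and $h$ by $\zeta$, which I believe goes through line for line and is what ``can be proved in a similar way'' refers to.
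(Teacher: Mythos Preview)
Your proposal is essentially correct and follows exactly the approach the paper indicates: the paper gives no detailed proof but says only that it ``can be proved in a similar way'' to Proposition~\ref{blowup}, and your plan to replace the blow-up formula by the projective bundle formula, expand the ample class binomially, and extract the base-components via pushforward is precisely that similar argument. One small caution: the coefficient extraction is not quite as clean as ``each $D^{(\text{power})}\cdot x_k=0$ separately''---the equations $\beta_j=0$ form a triangular-type system that must be combined (e.g.\ for $m=1$, $n=2p+1$ one needs to multiply one relation by $D$ and substitute into the other before Conjecture~\ref{Conj1} for $X$ applies)---but you already flag this as the expected bookkeeping, and it goes through.
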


\section{Beauville's conjecture}
Let $A$ be a $g$-dimensional complex abelian variety.
 By
\cite{B2}, we have a decomposition of the Chow groups  of $A$ as
follows: $CH^p(A) = \oplus_{s=p-g}^p CH^p_{(s)}(A)$, where
$CH^p_{(s)}(A)$ consists of classes $\alpha \in CH^p(A)$ such that
for any $k \in \zit$, we have $\k^* \alpha = k^{2p-s} \alpha$,
where $\k: A \to A$ is the multiplication by $k$. It was
conjectured in \cite{B1} and \cite{B2} that $CH^p_{(s)}(A)=0$ if
$s <0$, which has been proved for $p \in \{0, 1, g-2, g-1, g\}$
(\cite{B1}). Since then, there has appeared several equivalent
formulations of Beauville's conjecture, for example, in \cite{K},
it was shown that Beauville's conjecture is equivalent to the
hypercube conjecture for abelian varieties.
 Unfortunately, despite its importance, very little progress has been made
 (neither for these reformulations) during the last twenty years.

In \cite{KV}, Kimura and Vistoli proved that Beauville's
conjecture is equivalent to the strong stability conjecture on
Chow groups of symmetric products of curves.
 To state it, we need several notations.

Let $C$ be a smooth projective connected curve and $c_0 \in C$ a
fixed point.  The $n$-th symmetric product of $C$ will be denoted
by $C^{(n)}$. Let $\phi_n: C^{(n-1)} \to C^{(n)}$ be the addition
of the point $c_0$. We will denote by $z_n \in CH^1(C^{(n)})$ the
class of the divisor $\phi_{n}(C^{(n-1)})$ and we set $z:=
z_{2g-1}$. The Jacobian of $C$ will be denoted by $J(C)$.
The strong stability conjecture of \cite{KV} asserts that for all
$n \geq 2p+1$, the map $\phi^*_n: CH^p(C^{(n)}) \to CH^p(C^{(n-1)})$
is an isomorphism;

In \cite{P}, Paranjape conjectured (which goes back to \cite{H})
the following analogue to the weak Lefschetz theorem: if $\iota: Y
\hookrightarrow X$ is the inclusion of a smooth ample divisor in a
smooth projective variety, then $\iota^*: CH^p(X) \to CH^p(Y)$ is
an isomorphism for $p <\dim(Y)/2$.  In relation with our
conjectures here, we have
\begin{Lem}
If Conjecture \ref{Conj1} holds for $X$, then for any smooth ample
divisor  $\iota: Y \hookrightarrow X$, the map $\iota^*: CH^p(X) \to CH^p(Y)$
 is injective for $p  \leq \dim(Y)/2$.
\end{Lem}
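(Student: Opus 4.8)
The plan is to factor $\iota^*$ through the intersection-with-$D$ map on $X$ and invoke Conjecture \ref{Conj1}. Write $n = \dim X$ and $\dim Y = n-1$; the hypothesis $p \le \dim(Y)/2 = (n-1)/2$ gives $n \ge 2p+1$, in particular $n \ge 2p$. Let $D$ be an ample divisor on $X$ with $Y \in |D|$ (such a $D$ exists since $Y$ is a smooth ample divisor). First I would recall the projection formula for the regular embedding $\iota$: for $\alpha \in CH^p(X)$ one has $\iota_* \iota^* \alpha = \alpha \cdot [Y] = \alpha \cdot D$ in $CH^{p+1}(X)$. Iterating, $\iota_* \bigl( (\iota^* D)^{\,n-1-2p} \cdot \iota^* \alpha \bigr) = \alpha \cdot D^{\,n-2p}$ in $CH^{n-p}(X)$, using that $\iota^*$ is a ring homomorphism and $\iota^* D = [Y]|_Y$.

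Now suppose $\iota^* \alpha = 0$ for some $\alpha \in CH^p(X)$. Then the left-hand side of the displayed identity vanishes, so $\alpha \cdot D^{\,n-2p} = 0$ in $CH^{n-p}(X)$. Since $n \ge 2p$ and Conjecture \ref{Conj1} holds for $X$, the map $CH^p(X) \xrightarrow{\cdot D^{n-2p}} CH^{n-p}(X)$ is injective, whence $\alpha = 0$. This proves $\iota^*$ is injective on $CH^p(X)$.

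The only delicate point is the case $n - 1 - 2p < 0$, i.e. $p = (n-1)/2$ with $n$ odd, where the exponent $n-1-2p = 0$ and the factorization above is simply $\iota_* \iota^* \alpha = \alpha \cdot D$; then $\iota^* \alpha = 0$ forces $\alpha \cdot D^{n-2p} = \alpha \cdot D = 0$, and we again conclude by Conjecture \ref{Conj1} (here $n - 2p = 1 \ge 0$). So no genuine obstacle arises: the argument is uniform once one is careful that all the exponents appearing are nonnegative, which is exactly guaranteed by $p \le \dim(Y)/2$. I expect the ``hard part'' to be purely bookkeeping — making sure the exponent $n-1-2p$ in $\iota^*D$ is treated correctly at the boundary and that the projection formula is applied to the correct codimension — rather than anything substantive.
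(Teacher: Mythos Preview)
Your proposal is correct and follows the same idea as the paper: use the projection formula $\iota_*\iota^*(\bullet)=\bullet\cdot Y$ to turn the vanishing of $\iota^*\alpha$ into the vanishing of $\alpha\cdot D^{n-2p}$, and then invoke Conjecture~\ref{Conj1}. The paper phrases this a touch more economically --- it simply notes that $\cdot Y:CH^p(X)\to CH^{p+1}(X)$ is injective when $\dim X-2p\geq 1$ (since $\alpha\cdot Y=0$ forces $\alpha\cdot Y^{n-2p}=0$) --- whereas you first multiply by $(\iota^*D)^{n-1-2p}$ on $Y$ and then push forward; but the content is identical, and your boundary-case discussion is unnecessary once one observes that $\iota_*\iota^*\alpha=\alpha\cdot D$ alone already suffices.
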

\begin{proof}
It suffices to notice that $\iota_*(\iota^* \bullet) = \bullet
\cdot Y: CH^p(X) \to CH^{p+1}(X)$, which is injective if $\dim(X)
- 2p \geq 1.$
\end{proof}
\begin{Prop}
If Conjecture \ref{Conj1} holds for all symmetric products $(C^{(n)}, z_n)$
of a curve $C$, then the strong stability conjecture holds for $C$.
\end{Prop}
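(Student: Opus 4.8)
The plan is to unwind what the strong stability conjecture asserts: for $n\ge 2p+1$ the map $\phi_n^*\colon CH^p(C^{(n)})\to CH^p(C^{(n-1)})$ should be an isomorphism, so one checks injectivity and surjectivity separately. The geometric input used throughout is that $\phi_n$ is a closed immersion identifying $C^{(n-1)}$ with the smooth divisor $\{F\in C^{(n)}\colon F\ge c_0\}$, whose class is by definition $z_n$, and that $z_n$ is ample. In particular, by the projection formula $(\phi_n)_*\circ\phi_n^*=(\,\cdot\,z_n)\colon CH^p(C^{(n)})\to CH^{p+1}(C^{(n)})$ since $(\phi_n)_*(1)=z_n$.

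Injectivity is where the hypothesis enters, and the argument is exactly that of the Lemma immediately preceding the Proposition. Since $n\ge 2p+1\ge 2p$, Conjecture \ref{Conj1} for the pair $(C^{(n)},z_n)$ says $\cdot\,z_n^{\,n-2p}$ is injective on $CH^p(C^{(n)})$; as $n-2p\ge 1$ this map factors through $\cdot\,z_n$, so $\cdot\,z_n$ is injective on $CH^p(C^{(n)})$, and hence so is $\phi_n^*$. This is the only place Conjecture \ref{Conj1} is used.

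It then remains to prove that $\phi_n^*$ is surjective on $CH^p$ for $n\ge 2p+1$, and this I would establish unconditionally. The cleanest route is via the K\"unneth-type decomposition of the Chow motive of a symmetric product of a curve, $\mathfrak{h}(C^{(n)})=\mathrm{Sym}^n\mathfrak{h}(C)=\bigoplus_{a+b+c=n}\mathrm{Sym}^b\mathfrak{h}^1(C)\otimes\mathbb{L}^{c}$, where super-commutativity forces $b\le 2g$, so the sum is finite; under this decomposition the map induced by $\phi_n$ on Chow groups is the projection that kills the summands with $a=0$ and identifies the rest with the corresponding summands of $\mathfrak{h}(C^{(n-1)})$, so it is always surjective (and injective exactly when the $a=0$ summands have vanishing $CH^p$, which is how one recovers the equivalence with Beauville's conjecture of \cite{KV}). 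A more hands-on alternative uses the degree-$n$ addition map $\mu_n\colon C^{(n-1)}\times C\to C^{(n)}$: since $\mu_{n*}\mu_n^*=n$ one has $\phi_n^*=i_{c_0}^*\circ\mu_n^*$, and a base change/excess-intersection computation for the square of $\phi_n$ against $\mu_n$ yields the recursion $\phi_n^*\circ(\Gamma^{(n)})^*=\mathrm{id}+(\Gamma^{(n-1)})^*\circ\phi_{n-1}^*$, where $(\Gamma^{(m)})^*:=(\mu_m)_*\circ pr_1^*$ is the ``add a free point'' operator; combined with $CH^p(C^{(m)})=0$ for $m<p$ and with the fact that $C^{(n)}\to J(C)$ is a projective bundle for $n\ge 2g-1$ (so $\phi_n^*$ is already bijective for $n\ge\max(2g,\,g+p+1)$ by the projective bundle formula, in particular on the whole range $n\ge 2p+1$ once $p\ge g$), one tries to bootstrap surjectivity down into the remaining finite band of values of $n$.

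Putting these together, $\phi_n^*$ is an isomorphism on $CH^p$ for all $n\ge 2p+1$, which is the strong stability conjecture. The main obstacle is the surjectivity step: injectivity falls out immediately from Conjecture \ref{Conj1} via the Lemma, but surjectivity requires genuine information about $CH^\bullet(C^{(n)})$ in the band $2p+1\le n<\max(2g,g+p+1)$ — outside it the statement is either trivial or handed over by the projective bundle structure — and the delicate point is that the ``add a free point'' recursion above does not obviously close up by itself, so in practice one should invoke the motivic decomposition (or Collino's explicit description of the Chow groups of symmetric products) to finish.
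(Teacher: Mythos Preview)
Your injectivity argument is exactly the paper's: apply the preceding Lemma (Conjecture~\ref{Conj1} for $(C^{(n)},z_n)$ forces $\cdot\,z_n$ to be injective on $CH^p$ when $n\ge 2p+1$, hence so is $\phi_n^*$). For surjectivity, however, the paper's proof is a single sentence: Collino \cite{C1} proved that $\phi_n^*$ is \emph{always} surjective, for every $n$ and every $p$, unconditionally. So the whole proof in the paper is two lines.

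Your surjectivity discussion is therefore an unnecessary detour. The motivic decomposition route you sketch would work and is essentially a repackaging of what Collino's computation gives, but there is no need to set it up here. Your ``hands-on'' recursion, by your own admission, does not close up without further input, and the projective-bundle range argument only handles $n\ge 2g-1$; the remaining band is precisely what Collino's theorem covers directly. Since you do mention Collino at the end, your proposal is correct in substance --- but the efficient move is to cite \cite{C1} up front for surjectivity and be done, rather than treating it as the ``main obstacle''.
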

\begin{proof}
It was shown in \cite{C1} that $\phi_n^*$ is always surjective,
thus we just need to prove it is injective. This follows from the
previous lemma, since
 $z_n$ is an ample divisor.
\end{proof}

It turns out that the same conclusion holds  if one  only assumes
Conjecture \ref{Conj1} for $(C^{(2g-1)},z)$, as shown by the
following:
\begin{Prop}\label{equiv}
If Conjecture \ref{Conj1} holds for $(C^{(2g-1)}, z)$, then the
strong stability conjecture holds for $C$. Furthermore
Conjecture \ref{Conj1} for $(C^{(2g-1)}, z)$ is equivalent to
Beauville's conjecture for $J(C)$.
\end{Prop}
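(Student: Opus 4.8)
The plan is to treat the two statements separately; the first is essentially formal once one knows the geometry of the symmetric products, while the second needs a small amount of motivic input on top of it.

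\smallskip

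\emph{Conjecture \ref{Conj1} for $(C^{(2g-1)},z)\Rightarrow$ strong stability.} Recall that $\phi_n\colon C^{(n-1)}\hookrightarrow C^{(n)}$ realises $C^{(n-1)}$ as the divisor $z_n$, so that $\phi_n^*z_n=z_{n-1}$ and $\phi_{n*}(1)=z_n$. Iterating, the composite closed embedding $\psi\colon C^{(2p)}\hookrightarrow C^{(2g-1)}$ of $\phi_{2p+1},\dots,\phi_{2g-1}$ satisfies $\psi_*(1)=z^{\,2g-1-2p}$, whence by the projection formula $\psi_*\psi^*=(\,\cdot\,z^{\,2g-1-2p})\colon CH^p(C^{(2g-1)})\to CH^{2g-1-p}(C^{(2g-1)})$. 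This is exactly the map of Conjecture \ref{Conj1} for $(C^{(2g-1)},z,p)$, hence injective for every $p\le g-1$; therefore $\psi^*=\phi^*_{2p+1}\circ\cdots\circ\phi^*_{2g-1}$ is injective. Each $\phi_n^*$ is surjective by \cite{C1}, and in an injective composite the innermost factor is injective; stripping off $\phi^*_{2g-1}$ (which is thus bijective) and repeating for $n=2g-2,\dots,2p+1$ shows $\phi^*_n$ is an isomorphism for all $2p+1\le n\le 2g-1$. For $n\ge 2g$ both $C^{(n)},C^{(n-1)}$ are projective bundles over $J(C)$, the kernel of $\phi^*_n$ on $CH^p$ being a copy of $CH^{\,p-(n-g)}(J(C))=0$ (as $n-g>g-1\ge p$ when $p\le g-1$, and $p\ge g$ forces $n\ge 2g$ too). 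This covers all $n\ge 2p+1$ and gives the strong stability conjecture for $C$.

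\smallskip

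\emph{Equivalence with Beauville's conjecture.} One implication is now immediate: Conjecture \ref{Conj1} for $(C^{(2g-1)},z)$ yields strong stability for $C$ by the above, which is equivalent to Beauville's conjecture for $J(C)$ by Kimura--Vistoli \cite{KV}. For the converse put $J:=J(C)$ and assume Beauville's conjecture for $J$. Since $2g-1\ge 2g-1$, the Abel--Jacobi map makes $C^{(2g-1)}$ a $\pit^{g-1}$-bundle over $J$, so $h(C^{(2g-1)})=\bigoplus_{i=0}^{g-1}h(J)(-i)$; combined with the canonical Chow--K\"unneth decomposition of the abelian variety $J$ (Deninger--Murre) this gives a Chow--K\"unneth decomposition of $C^{(2g-1)}$ with
$$CH^p(h^m(C^{(2g-1)}))=\bigoplus_{i=0}^{g-1}CH^{\,p-i}_{(2p-m)}(J),$$
where $CH^{q}_{(s)}(J)$ is the Beauville component. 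The right-hand side vanishes for $m<p$ (then $2p-m>p-i$) and, by Beauville's conjecture, for $m>2p$ (then $2p-m<0$), so $CH^p(C^{(2g-1)})=\bigoplus_{m=p}^{2p}CH^p(h^m(C^{(2g-1)}))$. By hard Lefschetz for Chow motives of abelian schemes (K\"unnemann), inherited by the projective bundle $C^{(2g-1)}\to J$, cup product with the ample class $z$ shifts the Chow--K\"unneth grading by exactly $2$ and, with $d:=2g-1$, induces an isomorphism of Chow motives $\cdot\,z^{\,d-m}\colon h^m(C^{(2g-1)})\xrightarrow{\ \sim\ }h^{2d-m}(C^{(2g-1)})(d-m)$. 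Setting $k:=2g-1-2p$ and using $d-m\ge k$ for $m\le 2p$, the factorisation $\cdot\,z^{\,d-m}=(\,\cdot\,z^{\,d-m-k})\circ(\,\cdot\,z^{\,k})$ shows that $\cdot\,z^{\,k}$ is split injective on each $h^m(C^{(2g-1)})$ with $m\le 2p$; since distinct values of $m$ land in distinct components, $\cdot\,z^{\,k}$ is injective on $CH^p(C^{(2g-1)})$ for every $p\le g-1$, i.e. Conjecture \ref{Conj1} holds for $(C^{(2g-1)},z)$.

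\smallskip

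\emph{The main obstacle.} The identities $\phi_n^*z_n=z_{n-1}$, $[\phi_n(C^{(n-1)})]=z_n$ and Collino's surjectivity make the first part purely formal; the real work is in the converse of the second, where one genuinely needs hard Lefschetz at the level of \emph{Chow} (not merely homological) motives for the ample class $z$ on $C^{(2g-1)}$ — the cohomological statement together with the standard conjectures would only give an isomorphism of homological motives, which is not enough. This is precisely what K\"unnemann's theorem on abelian schemes supplies, but one must check that $z$ falls within its scope (or replace it by a numerically equivalent polarisation-type class) and that the decomposition is inherited by the bundle $C^{(2g-1)}\to J$ as claimed. One should likewise confirm that the result of Kimura--Vistoli used above is the fixed-curve statement ``strong stability for $C\Leftrightarrow$ Beauville for $J(C)$''.
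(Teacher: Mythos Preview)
Your proof of the first implication is correct and is in fact cleaner than the paper's. The paper fixes $n$ with $2p+1\le n\le 2g-1$, writes $\alpha=\sum_j p_n^*(a_j)z_n^{p-j}$ via Collino's description, computes $(i_n)_*(\alpha\cdot z_n)=\bigl(\sum_j p_{2g-1}^*(a_j)z^{p-j}\bigr)\cdot z^{2g-n}$, multiplies up to exponent $2g-1-2p$, and then uses Conjecture~\ref{Conj1} together with the injectivity of $(i_n)_*$ from \cite{C1}. Your argument via $\psi_*\psi^*=\,(\cdot\,z^{2g-1-2p})$ and successive peeling of bijective factors $\phi_n^*$ reaches the same conclusion without any coordinate computation. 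For $n\ge 2g$ the paper simply cites \cite{KV}, Prop.~2.9(a); your projective-bundle sketch aims at the same fact, though the description of $\ker\phi_n^*$ as a single Chow group of $J$ would need a little more care.

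The real issue is the converse direction (Beauville $\Rightarrow$ Conjecture~\ref{Conj1} for $(C^{(2g-1)},z)$). The paper does \emph{not} attempt a motivic hard Lefschetz argument here; it just invokes \cite{KV}, Prop.~2.18, for the equivalence Beauville $\Leftrightarrow$ strong stability, and the ingredients of the first paragraph already give strong stability $\Rightarrow$ Conjecture~\ref{Conj1}: once each $\phi_n^*$ is an isomorphism on $CH^p$, your $\psi^*$ is an isomorphism, and since $\psi_*=(i_{2p})_*$ is injective by \cite{C1}, so is $\psi_*\psi^*=(\cdot\,z^{2g-1-2p})$. This closes the equivalence without any appeal to a Lefschetz isomorphism on $h^m(C^{(2g-1)})$.

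Your alternative route through such an isomorphism is precisely where the gap you flag becomes genuine. K\"unnemann's theorem in \cite{K} is for abelian schemes with a \emph{symmetric} relatively ample class; it says nothing directly about projective bundles over them or about the particular class $z$. Extending it to $(C^{(2g-1)},z)$ would require decomposing $z$ in terms of the relative $\mathcal{O}(1)$ and a pullback from $J$ and then proving a full Lefschetz package for the resulting operator on $\bigoplus_i h(J)(-i)$ --- this is nontrivial, and the paper explicitly remarks (after Lemma~\ref{pbig}) that passing Conjecture~\ref{Conj1} to projective bundles is not known in general. So drop the motivic detour: observe instead that your own first paragraph, read backwards together with Collino's injectivity of $\psi_*$, already yields strong stability $\Rightarrow$ Conjecture~\ref{Conj1}, and then quote \cite{KV} for Beauville $\Leftrightarrow$ strong stability.
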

\begin{proof}
 By Prop. 2.9 (a) \cite{KV}, the strong stability
conjecture is true if $n > 2g-1$. We may assume in the following
that $2g -1 \geq n$.  Let $i_n: C^{(n)} \to C^{(2g-1)}$ be the
composition of additions of the point $c_0$. By \cite{C1}, the
morphism $(i_n)_*: CH(C^{(n)}) \to CH(C^{(2g-1)})$ is injective
and $(i_n)_*(z_n^k) =z^{2g-1-n+k}.$

Note that we just need to prove the injectivity of $\phi_n^*$.
Let $\alpha \in CH^p(C^{(n)})$, then $$\begin{aligned} \phi_n^*
(\alpha)=0 & \Leftrightarrow (i_{n-1})_* (\phi_n^* \alpha) = 0
\Leftrightarrow (i_n)_*(\phi_n)_* (\phi_n^* \alpha)=0 \\
&\Leftrightarrow (i_n)_*(\alpha \cdot (\phi_n)_* [C^{(n-1)}])
=(i_n)_*(\alpha z_n)=0.
\end{aligned}$$

Let $p_n: C^{(n)} \to J(C)$ be the morphism $D \mapsto
\0_C(D-nc_0),$ then $p_n=p_{2g-1} \circ i_n$. By \cite{C1}, we can
write $\alpha = \sum_j p_n^*(a_j)z_n^{p-j}$ with $a_j \in
CH^j(J(C)).$ We have
$$
\begin{aligned}
(i_n)_* (\alpha z_n) &= \sum_j (i_n)_*(i_n^* p_{2g-1}^*(a_j)\cdot
z_n^{p-j+1}) = \sum_j p_{2g-1}^*(a_j) \cdot (i_n)_* (z_n^{p-j+1}) \\
&= \sum_j p_{2g-1}^*(a_j) \cdot z^{2g-n+p-j} = (\sum_j
p_{2g-1}^*(a_j)z^{p-j}) z^{2g-n}.
\end{aligned}
$$

If $\phi_n^* \alpha = 0$, then $(\sum_j p_{2g-1}^*(a_j)z^{p-j})
z^{2g-n} = 0$. Note that $z$ is ample and $2g-1-2p \geq 2g-n$. We
have $(\sum_j p_{2g-1}^*(a_j)z^{p-j}) z^{2g-1-2p} = 0$. As
Conjecture \ref{Conj1} holds for $(C^{(2g-1)},z)$, we get $\sum_j
p_{2g-1}^*(a_j)z^{p-j} =0$, which gives $\alpha = i_n^* (\sum_j
p_{2g-1}^*(a_j)z^{p-j}) = 0$.

The second statement follows from Prop. 2.18 in \cite{KV}.
\end{proof}

Now we consider Conjecture \ref{Conj1} for $(C^{(2g-1)}, z)$.
Recall that the natural map $p_k: C^{(k)} \to J(C)$ is birational
onto its image when $k \leq g$. In this case, we set $w_k =
p_{g-k}(C^{(g-k)}) \in CH^k(J(C))$ and $v_k=(-1)^k (T_c)_* (-1)_*
w_k$, where $T_c$ is the translation and $c$ is the image of the
canonical divisor on $C$ in $J(C)$. Recall that the map
$C^{(2g-1)} \to J(C)$ is a projective bundle $\pit(F) \to J(C)$
and the total Chern class of $F$ was computed in \cite{M}: $c(F) =
\sum_{k=0}^g v_k.$ In particular, the Chow group $CH(C^{(2g-1)})$
is given by $CH(J(C))[z]$ (here we identify $CH(J(C))$ with its
image under $p^*_{2g-1}$), and the minimal equation of $z$ is
$\alpha:= \sum_{k=0}^g v_k z^{g-k} = 0.$

  Let
$p$ be a non-negative integer such that $2g-1 \geq 2p+1$ and
$k:=g-p-1$. Assume first $k \leq p$. Let $y = \sum_{i=0}^p y_i
z^{p-i} \in CH^p(C^{(2g-1)})$ with $y_i \in CH^i(J(C))$ such that
$y \cdot z^{2g-2p-1} = 0$. Then by the Hard Lefschetz theorem,
we have $y_0 = 0$ in $CH^0(J(C))$.

Let $a_1= y_1, \cdots, a_k= y_k-\sum_{j=1}^{k-1} a_jv_{k-j}$. As
$y \cdot z^{2g-2p-1} = 0$ is equivalent to $\sum_{i=1}^p y_i
z^{g+k-i}=0$, by replacing powers of $z$ of degrees no less than
$g$ by using the relation $z^g = -(\sum_{i=1}^g v_i z^{g-i})$, we
obtain a relation between $1, z, \cdots, z^{g-1}$ with coefficients in
$CH(J(C))$. Thus the coefficients are zeros, which gives the
following equations in $CH(J(C))$:
$$
(1) \left\{ \begin{aligned}
 a_1 v_p + &\cdots + a_k
v_{p+1-k}  = 0 \\
& \vdots \\
a_1 v_{g-1} + &\cdots + a_k v_{g-k}  = 0
\end{aligned} \right.
$$
$$
(2) \left\{ \begin{aligned}
 y_{k+1}  =  a_1 v_k + &\cdots + a_k
v_{1} \\
& \vdots \\
y_p= a_1 v_{p-1} + &\cdots + a_k v_{p-k}
\end{aligned} \right.
$$
\begin{Lem} \label{ind}
 Let $C$ be a smooth projective curve.
Assume that the strong stability conjecture holds for $C$ with
$p=g-1-l$, then Beauville's conjecture holds for $CH^l(J(C))$.
\end{Lem}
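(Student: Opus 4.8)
Write $p=g-1-l$, so that $k:=g-p-1=l$, and (as in the discussion preceding the lemma) treat the range $k\le p$ first; the remaining case $k>p$ will be commented on at the end. The plan is in two steps: first, that the strong stability hypothesis forces Conjecture \ref{Conj1} for $(C^{(2g-1)},z)$ in codimension $p$ — equivalently, by the computation above, that the linear system $(1)$ has only the trivial solution; second, that from this one can read off Beauville's conjecture for $CH^{l}(J(C))$, i.e.\ that $CH^{l}_{(s)}(J(C))=0$ for $s<0$. Together with Proposition \ref{equiv} and \cite{KV} this will close the loop, showing that for each $p$ strong stability at codimension $p$, Conjecture \ref{Conj1} for $(C^{(2g-1)},z)$ at codimension $p$, and Beauville's conjecture for $CH^{g-1-p}(J(C))$ are all equivalent.

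For the first step, let $i_{2p}\colon C^{(2p)}\to C^{(2g-1)}$ be the iterated addition of $c_0$. By \cite{C1} the pushforward $(i_{2p})_*$ is injective and $(i_{2p})_*(1)=z^{\,2g-1-2p}$, so by the projection formula $(i_{2p})_*\circ i_{2p}^*$ equals multiplication by $z^{\,2g-1-2p}$ on $CH^p(C^{(2g-1)})$. On the other hand the strong stability hypothesis says precisely that the maps $\phi_n^*$ are isomorphisms for all $n\ge 2p+1$, hence the composite $i_{2p}^*=\phi_{2p+1}^*\circ\cdots\circ\phi_{2g-1}^*$ is an isomorphism. Therefore $\alpha\cdot z^{\,2g-1-2p}=0$ forces $i_{2p}^*\alpha=0$ and then $\alpha=0$, i.e.\ Conjecture \ref{Conj1} holds for $(C^{(2g-1)},z)$ in codimension $p$; by the computation before the lemma this is the same as saying that $(1)$ has only the trivial solution $(a_1,\dots,a_k)=0$ in $\bigoplus_{j=1}^{k}CH^{j}(J(C))$. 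Note that this step uses nothing about the range of $k$.

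The second step is the heart of the matter, and presumably the source of the label: one extracts the weight vanishing $CH^{l}_{(s)}(J(C))=0$ ($s<0$) from the triviality of the solution set of $(1)$ by a descending induction on the Beauville weight $s$. Given $\xi\in CH^{l}_{(s)}(J(C))$ with $s<0$, one builds a candidate solution of $(1)$ with $a_k=\xi$ modulo classes of strictly higher weight — legitimate by the inductive hypothesis — and solves for $a_1,\dots,a_{k-1}$; the identities needed are governed by the Chern classes $v_m=c_m(F)$, whose lowest Beauville components are pinned down by \cite{M} together with the classical cases of Beauville's conjecture for $CH^{0},CH^{1},CH^{g-2},CH^{g-1},CH^{g}$ of $J(C)$, and triviality of the solution then yields $\xi=0$. (Alternatively one may invoke the codimension-graded form of Prop.~2.18 of \cite{KV}, which directly identifies Conjecture \ref{Conj1} for $(C^{(2g-1)},z)$ at codimension $p$ with Beauville's conjecture for $CH^{g-1-p}(J(C))$.) The main obstacle is exactly this passage: matching the codimension-$p$ data on $C^{(2g-1)}$ with the codimension-$l$ data on $J(C)$ and controlling how the Beauville decomposition interacts with the $v_m$ and with the triangular change of variables $(y_i)\leftrightarrow(a_j)$; the first step, by contrast, is purely formal. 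Finally, the range $k>p$ — which occurs only in relatively large genus — is handled either by the same scheme applied after the appropriate duality on $J(C)$, or it already falls within the known cases of Beauville's conjecture.
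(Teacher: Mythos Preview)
Your Step~1 is correct and is a pleasant observation: from strong stability in codimension $p$ one does get Conjecture~\ref{Conj1} for $(C^{(2g-1)},z,p)$ via the projection formula $(i_{2p})_*i_{2p}^*(\alpha)=\alpha\cdot z^{\,2g-1-2p}$ together with Collino's injectivity of $(i_{2p})_*$.

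However, Step~2 is a genuine gap, and you essentially say so yourself (``the main obstacle is exactly this passage''). The descending-induction sketch does not work as written: to feed $\xi\in CH^l_{(s)}(J(C))$ into system~(1) as $a_k$ you would need to produce $a_1,\dots,a_{k-1}$ making \emph{all} $k+1$ equations of (1) hold, and nothing you have said explains why this overdetermined system admits a solution; the phrase ``modulo classes of strictly higher weight --- legitimate by the inductive hypothesis'' is not a mechanism. The alternative appeal to a ``codimension-graded form of Prop.~2.18 of \cite{KV}'' is likewise not a proof --- that proposition gives the global equivalence with Beauville's conjecture, not a codimension-by-codimension matching, and making such a graded statement precise is exactly what is at stake. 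Your remark on the range $k>p$ is also a hand-wave: $l>(g-1)/2$ is not covered by the known cases $l\in\{0,1,g-2,g-1,g\}$ in general.

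The paper's argument bypasses system~(1) and Conjecture~\ref{Conj1} entirely. It works on the infinite symmetric product: by \cite{KV}, $CH^p(S^\infty C)=\bigoplus_{q=0}^{p}CH^q(J(C))\cdot c_1(\mathcal O(1))^{p-q}$, and this carries a multiplication-by-$N$ action whose eigenvalues are among $N^0,\dots,N^{2p}$ once strong stability in codimension $p$ is assumed (since $CH^p(S^\infty C)\simeq CH^p(C^{(2p)})$). Reading off the eigenvalues on each summand forces $CH^q_{(q-g+l)}(J(C))=0$ for every $q\le p=g-1-l$, and then the Fourier isomorphisms $CH^q_{(q-g+l)}(J(C))\simeq CH^l_{(q-g+l)}(J(C))$ transport these vanishings to $CH^l_{(s)}(J(C))=0$ for all $s<0$. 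This eigenvalue bookkeeping is short, uniform in $p$, and needs no case split $k\le p$ versus $k>p$; it is the missing idea in your Step~2.
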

\begin{proof}
As in \cite{KV}, we will denote by $CH^p(S^\infty C)$ the inverse
limit $\underleftarrow{\lim}  CH^p(C^{(m)})$ and $\0(1)$ the
natural line bundle on $S^\infty C$. Then by Prop. 2.9 in {\em
loc. cit.}, we have
$$
CH^p(S^\infty C)  = CH^p(J(C)) \oplus CH^{p-1}(J(C)) \cdot
c_1(\0(1)) \oplus \cdots \oplus CH^0(J) \cdot c_1(\0(1))^p.$$

 As the strong stability conjecture holds for $p=g-1-l$,
 the eigenvalues of the multiplication by $N$ on $CH^p(S^\infty C)$ are $N^0, \cdots,
N^{2(g-1-l)}$. As a consequence, for $q \leq p$, the eigenvalues
of $CH^q(J(C))$ are $N^q, \cdots, N^{q+g-1-l}.$ Thus we obtain
that $CH^q_{(q-g+l)}(J(C)) = 0$ for $q \leq g-l-1$. By the
isomorphism $CH^q_{(q-g+l)}(J(C)) \simeq CH^l_{(q-g+l)}(J(C))$ for all
$q \leq g-l-1$, we obtain that $CH^l_{(s)}(J(C)) = 0$ for all $s <0$,
i.e. Beauville's conjecture holds for $CH^l(J(C))$.
\end{proof}

\begin{Lem} \label{pbig}
Let $\Theta$ be a symmetric theta divisor on a Jacobian $J(C)$.
Assume Conjecture \ref{Conj1} for $(J(C), \Theta)$. Then
Conjecture \ref{Conj1} holds for $(C^{(2g-1)}, z, p)$ such that
$2p+1 \geq g$.
\end{Lem}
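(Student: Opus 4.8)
The goal is to show that the system (1) forces $a_1=\cdots=a_k=0$: once that is done, the system (2) yields $y_{k+1}=\cdots=y_p=0$, and the triangular relations $a_1=y_1$, $a_j=y_j-\sum_{l=1}^{j-1}a_lv_{j-l}$ then give $y_1=\cdots=y_k=0$ as well, so that $y=0$.

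The one structural fact linking (1) to the hypothesis is that $v_1=c_1(F)=-w_1$, where $w_1=p_{g-1}(C^{(g-1)})$ is an ample theta-divisor class (the twists by $(T_c)_*$ and $(-1)_*$ in the definition of the $v_k$ are exactly what make $v_1$, up to sign and a translation, the principal polarization). Hence Conjecture \ref{Conj1} for $(J(C),\Theta)$ says precisely that $\cdot(-v_1)^{g-2j}\colon CH^j(J(C))\to CH^{g-j}(J(C))$ is injective whenever $g\ge 2j$, and the hypothesis $2p+1\ge g$ forces $k=g-p-1\le(g-1)/2$, so this applies to each $a_j$ with the legal exponent $g-2j\ge1$. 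It is sometimes cleaner to repackage (1): applying $\pi_*(z^s\cdot-)$, with $\pi:=p_{2g-1}$, to the relation $y\cdot z^{2g-2p-1}=0$, one obtains $\sum_i y_i\,\sigma_{g-p+s-i}=0$ for $s=0,\dots,p$, where $\sigma_l:=\pi_*(z^{g-1+l})$ is the $l$-th Segre class of $F$; since $z^{g-1+l}=(i_{g-l})_*[C^{(g-l)}]$ one gets $\sigma_l=(p_{g-l})_*[C^{(g-l)}]=w_l$, so the Segre classes of $F$ are literally the classes $w_l$ appearing in Beauville's formula, and one may argue with either system.

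The plan is a downward induction on $j$: first show $a_k=0$, then $a_{k-1}=0$, and so on. To get started one isolates $a_k$ from the bottom equations $E_k,E_{k-1},\dots$ of (1), which involve the top Chern classes $v_{g-1},v_{g-2},\dots$. In codimensions $g-1$ and $g-2$ (and $g$) Beauville's conjecture is a theorem, so there $v_l$ splits as $(-1)^l\Theta^l/l!$ plus a class of strictly positive Beauville weight; comparing the lowest-weight components of the relations coming from $E_k$ (and from $E_{k-1}$, and $E_k$ after multiplication by $v_1$, etc.), and invoking the already unconditional codimension-$1$ case of Conjecture \ref{Conj1} (Proposition \ref{p=1}), one converts these into $a_k\cdot\Theta^{g-2k}=0$, hence $a_k=0$ by the hypothesis. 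Substituting $a_k=0$ back, the restriction of (1) to $a_1,\dots,a_{k-1}$ has the shape of the original system with $k$ lowered by one, and the induction continues; the extreme case $k=0$ (that is $g=p+1$) needs no reduction of powers of $z$ at all, since then $z^{g-1},\dots,z^{p}$ already sit inside a $CH(J(C))$-basis and $y=0$ is immediate, while $k=1$ (that is $p=g-2$) collapses to $a_1v_{g-2}=a_1v_{g-1}=0$, handled exactly as just described.

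The main obstacle is this step for general $j<k$: the \emph{interior} Chern classes $v_l$ with $2\le l\le g-3$, which enter $E_t$ once $k\ge2$, are not known in $CH^*(J(C))$ to equal $(-1)^l\Theta^l/l!$ — the discrepancy lies in $\bigoplus_{s\ge1}CH^l_{(s)}(J(C))$ and is essentially Beauville's conjecture in codimension $l$. What must be checked is that the hypothesis $2p+1\ge g$ arranges matters so that, after the defining relations (2) and the already-vanishing $a$'s have been removed, every surviving constraint on the next $a_j$ can nonetheless be pushed through Conjecture \ref{Conj1} for $(J(C),\Theta)$ — if necessary after passing to a single Beauville component, where the required injectivity is again the codimension-$j$ instance of the hypothesis — without ever invoking Beauville's conjecture in a codimension where it is unavailable. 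Organizing this Beauville-component bookkeeping consistently along the whole induction is where the real work lies.
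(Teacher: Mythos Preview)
Your sketch has a genuine gap, and it stems from a misapprehension. You treat the decomposition $w_l=\Theta^l/l!+(\text{strictly positive Beauville weight})$ as available only for $l\in\{g-2,g-1,g\}$, where Beauville's conjecture is a theorem, and you flag the ``interior'' $v_l$ as the obstacle. In fact this decomposition is unconditional for \emph{every} $l$. Since $w_l=C^{*(g-l)}/(g-l)!$ is a Pontryagin power of the Abel--Jacobi image $C\in CH^{g-1}(J(C))$, and Pontryagin product is additive on Beauville weight, the known vanishing $CH^{g-1}_{(s)}(J(C))=0$ for $s<0$ already forces $(w_l)_{(s)}=0$ for all $s<0$; and $(w_l)_{(0)}=\Theta^l/l!$ follows from $C_{(0)}=\Theta^{g-1}/(g-1)!$ (via the injectivity of $CH^{g-1}_{(0)}\to H^{2g-2}$, \cite{B2}) together with Corollary~2 of \cite{B1}. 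The difficulty you explicitly leave open (``organizing this Beauville-component bookkeeping \ldots is where the real work lies'') dissolves once this is seen.

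The paper exploits this as follows, and the organization is not your downward induction on the index of $a_i$. Set $b_i=(-1)^*T_c^*a_i$, so that system~(1) becomes the analogous system in the $w_l$; assuming not all $b_i$ vanish, let $j=\min\{s:\exists\,i,\ (b_i)_{(s)}\neq0\}$. By minimality of $j$ and the unconditional facts above, the weight-$j$ component of each equation involves only the $(b_i)_{(j)}$ against $(w_l)_{(0)}=\Theta^l/l!$. Multiplying the $t$-th equation by $\Theta^{k-t}$ yields a linear system in $(b_1)_{(j)}\Theta^{g-2},\ldots,(b_k)_{(j)}\Theta^{g-k-1}$ with nonsingular numerical coefficient matrix, so $(b_1)_{(j)}\Theta^{g-2}=0$ and hence $(b_1)_{(j)}=0$ by Proposition~\ref{p=1}. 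Substituting back and using the first $k-1$ equations gives $(b_2)_{(j)}\Theta^{g-4}=0$, whence $(b_2)_{(j)}=0$ by the hypothesis on $(J(C),\Theta)$; iterating up to $(b_k)_{(j)}=0$ contradicts the choice of $j$. Note the inner loop runs from $i=1$ upward, not from $i=k$ downward as you propose: the raw system only delivers $(b_i)_{(j)}\Theta^{g-1-i}=0$, and for $i\ge2$ this exponent exceeds the $g-2i$ required to invoke Conjecture~\ref{Conj1}, which is why one must first clear $(b_1)_{(j)},\ldots,(b_{i-1})_{(j)}$ and then rerun with lower powers of $\Theta$.
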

\begin{proof}
Let $k:=g-p-1$. As $2p+1 \geq g$, we have $k\leq p$ and we will
use the same notations as in the discussions before Lemma \ref{ind}.
 Let $b_i= (-1)^* T_c^* a_i, 1 \leq i \leq k$, then by
using the projection formula, equations (1) become
$$
(3) \left\{ \begin{aligned}
  b_1 w_p - b_2 w_{p-1} + &\cdots + (-1)^k b_k
w_{p+1-k}  = 0 \\
& \vdots \\
 b_1 w_{g-1} -b_2 w_{g-2} + &\cdots + (-1)^k b_k w_{g-k}  = 0
\end{aligned} \right.
$$

We  need to prove that $b_i = 0$ for any $i$ . Assume this is not
the case. Let $(b_i)_{(s)}$ be the component of $b_i$ in
$CH^i_{(s)}(J(C))$. Let $j = \min \{s \ | \ \exists i \ \text{s.t.
} (b_i)_{(s)} \neq 0 \}.$ Note that $w_i = C^{*(g-i)}/(g-i)!$,
where by abusing the notations, $C$ denotes also its image in
$J(C)$, thus $(w_i)_{(s)} = 0 $ for any $s<0$. This gives the
following equations:
$$
(4) \left\{ \begin{aligned}
  (b_1)_{(j)} (w_p)_{(0)} - (b_2)_{(j)} (w_{p-1})_{(0)} + &\cdots + (-1)^k (b_k)_{(j)}
(w_{p+1-k})_{(0)}  = 0 \\
& \vdots \\
 (b_1)_{(j)} (w_{g-1})_{(0)} -(b_2)_{(j)} (w_{g-2})_{(0)} + &\cdots + (-1)^k (b_k)_{(j)} (w_{g-k})_{(0)}  = 0
\end{aligned} \right.
$$
By Poincar\'e's formula, $C_{(0)}$  has the same cohomological
class as $\frac{\Theta^{g-1}}{(g-1)!}$ and the later lies also in
$CH^{g-1}_{(0)}(J(C))$. By \cite{B2}(p. 650), the map
$CH^{g-1}_{(0)} \to H^{2g-2}$ is injective, thus we obtain
$C_{(0)} = \frac{\Theta^{g-1}}{(g-1)!}$. By Cor. 2 \cite{B1} (p.
249), we have $(g-r)! C_{(0)}^{*r} = r! \Theta^{g-r},$ which gives
that $(w_{g-r})_{(0)} =  \frac{\Theta^{g-r}}{(g-r)!}$ for all $1
\leq r \leq g$.  Using these formulae, and noting that $p=g-k-1$,
equations (4) give the following
$$
(5) \left\{ \begin{aligned}
  (b_1)_{(j)} \Theta^{g-k-1}/(g-k-1)! &- \cdots + (-1)^{k-1} (b_k)_{(j)}
\Theta^{g-2k}/(g-2k)!  = 0 \\
& \vdots \\
 (b_1)_{(j)} \Theta^{g-2}/(g-2)! - \cdots + &(-1)^{k-1} (b_k)_{(j)}
\Theta^{g-k-1}/(g-k-1)!  = 0
\end{aligned} \right.
$$
Multiplying the j-th equation in (5) by  $\Theta^{k-j}$, we obtain
a linear system of equations of $(b_1)_{(j)} \Theta^{g-2}, \cdots,
(b_k)_{(j)} \Theta^{g-k-1}$, whose coefficient matrix has non-zero
determinant, thus  $(b_1)_{(j)} \Theta^{g-2} =0$ in $CH(J(C))$,
which implies that $(b_1)_{(j)}=0$ in $CH^1(J(C))$. We can now use
the first $(k-1)$ equations and a similar argument to deduce that
$(b_2)_{(j)} \cdot \Theta^{g-4} = 0$, which by our hypothesis
implies that $ (b_2)_{(j)} = 0$. We can continue to obtain that
$(b_i)_{(j)}=0$ for all $i$, a contradiction to the definition of
$j$.
\end{proof}
\begin{Rque} \upshape
(i) In general, one expects that if Conjecture \ref{Conj1} holds for a
smooth projective variety $X$, then it holds  for projective
bundles on $X$. Unfortunately, we can only prove this for a few
cases (see Proposition \ref{projbundle} and the previous Lemma).

(ii) With some efforts, one can prove in a similar way that the
Lemma holds without the restriction on $p$.
\end{Rque}

\begin{Thm} \label{main}
Let $J(C)$ be the Jacobian of a smooth projective curve $C$
and $\Theta$ a symmetric theta divisor.
The following conjectures are equivalent:

(i) Beauville's conjecture for $J(C)$;

(ii) Conjecture \ref{Conj1} for $J(C)$ for any ample divisor $D$;

(iii) Conjecture  \ref{Conj1} for $(J(C), \Theta)$;

(iv) Conjecture \ref{Conj1} for all $(C^{(k)}, z_k), k\geq 1$;

(v) Conjecture \ref{Conj1} for  $(C^{(2g-1)}, z)$.
\end{Thm}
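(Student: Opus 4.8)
The plan is to establish the cycle $(ii)\Rightarrow(iii)\Rightarrow(i)\Rightarrow(ii)$ together with the two equivalences $(i)\Leftrightarrow(v)$ and $(i)\Leftrightarrow(iv)$. Three links come essentially for free: $(ii)\Rightarrow(iii)$ since $\Theta$ is ample; $(iv)\Rightarrow(v)$ since $z=z_{2g-1}$, so $(v)$ is the instance $k=2g-1$ of $(iv)$; and $(i)\Leftrightarrow(v)$ is the second assertion of Proposition~\ref{equiv}. Hence the work is to prove $(iii)\Rightarrow(i)$, $(i)\Rightarrow(ii)$ and $(i)\Rightarrow(iv)$.

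For $(iii)\Rightarrow(i)$ I would run a descent on codimension using Beauville's Fourier transform $\mathcal F$ on $CH^{*}(J(C))$, identifying $\widehat{J(C)}\simeq J(C)$ by the principal polarization; recall that $\mathcal F$ restricts to isomorphisms $CH^{q}_{(s)}(J(C))\xrightarrow{\sim}CH^{g-q+s}_{(s)}(J(C))$ (\cite{B1}). Since $\Theta$ is symmetric, $\Theta\in CH^{1}_{(0)}$, so $\cdot\,\Theta^{g-2p}$ preserves the Beauville grading, and $(iii)$ makes it injective; thus for $g\ge 2p$ we get an injection $CH^{p}_{(s)}(J(C))\hookrightarrow CH^{g-p}_{(s)}(J(C))$, and composing with the Fourier isomorphism $CH^{g-p}_{(s)}(J(C))\xrightarrow{\sim}CH^{p+s}_{(s)}(J(C))$ we conclude that $CH^{p}_{(s)}(J(C))\ne 0$ forces $CH^{p+s}_{(s)}(J(C))\ne 0$. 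For $s<0$ this iterates (each step is legitimate since $p+ns\le p\le g/2$) and produces nonzero groups $CH^{p+ns}_{(s)}(J(C))$ with $p+ns\to-\infty$, an absurdity. So $CH^{p}_{(s)}(J(C))=0$ for $s<0$ whenever $g\ge 2p$; the remaining range $g<2p$ reduces to this by a single application of $\mathcal F$ (there $0\le g-p+s<g/2$, using the unconditional vanishing for $s<p-g$). Hence Beauville's conjecture holds, which is $(i)$. (Alternatively, $(iii)$ yields $(v)$ by Lemma~\ref{pbig} together with the extension to all $p$ indicated in the Remark following it, and then one concludes by $(v)\Leftrightarrow(i)$.)

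The step I expect to be the real obstacle is $(i)\Rightarrow(ii)$, where one must extract a hard-Lefschetz statement on Chow groups of $J(C)$ from the purely structural input of Beauville's conjecture. Granting $(i)$, the decomposition $CH^{p}(J(C))=\bigoplus_{s\ge 0}CH^{p}_{(s)}(J(C))$ is a Chow--K\"unneth decomposition, and one may invoke the Lefschetz-type standard conjecture and the motivic hard Lefschetz theorem for abelian varieties, both known unconditionally (Lieberman, Kleiman; K\"unnemann). For a symmetric ample class $\eta$ these supply, for every $s$, an isomorphism $\cdot\,\eta^{g-2p+s}\colon CH^{p}_{(s)}(J(C))\xrightarrow{\sim}CH^{g-p+s}_{(s)}(J(C))$; since $s\ge 0$ by $(i)$ one factors $\cdot\,\eta^{g-2p+s}=(\cdot\,\eta^{s})\circ(\cdot\,\eta^{g-2p})$, whence $\cdot\,\eta^{g-2p}$ is injective on each $CH^{p}_{(s)}$, i.e.\ Conjecture~\ref{Conj1} holds for $(J(C),\eta)$. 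For an arbitrary ample $D$, write $D=D^{+}+D^{-}$ with $D^{+}$ symmetric and $D^{-}\in\mathrm{Pic}^{0}(J(C))\otimes\qit$; then $D^{+}$ is numerically equivalent to $D$, hence ample, and expanding $D^{g-2p}=\sum_{j}\binom{g-2p}{j}(D^{+})^{g-2p-j}(D^{-})^{j}$, together with $(D^{+})^{a}(D^{-})^{b}\in CH^{a+b}_{(b)}$, exhibits $\cdot\,D^{g-2p}$ as triangular for the Beauville grading with diagonal blocks $\cdot\,(D^{+})^{g-2p}$; a triangular operator with injective diagonal is injective, so Conjecture~\ref{Conj1} holds for $(J(C),D)$. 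The delicate point is precisely that it is the hypothesis $s\ge 0$ (Beauville's conjecture) which makes the factorization through $\cdot\,\eta^{s}$ possible, fed through the nontrivial but established motivic hard Lefschetz for abelian varieties; without $(i)$ the argument collapses.

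Finally, for $(i)\Rightarrow(iv)$ one already has $(i)\Leftrightarrow(iii)\Leftrightarrow(v)$. When $1\le k\le 2g-1$, Conjecture~\ref{Conj1} for $(C^{(k)},z_k)$ reduces to $(C^{(2g-1)},z)$ through the inclusion $i_k\colon C^{(k)}\hookrightarrow C^{(2g-1)}$ exactly as in the proof of Proposition~\ref{equiv}: writing $\alpha=\sum_{j}p_k^{*}(a_j)z_k^{p-j}$ (\cite{C1}) and pushing forward $\alpha\cdot z_k^{k-2p}=0$ via $(i_k)_{*}(z_k^{m})=z^{2g-1-k+m}$ and $i_k^{*}z=z_k$ gives $\bigl(\sum_{j}p_{2g-1}^{*}(a_j)z^{p-j}\bigr)\cdot z^{2g-1-2p}=0$, so $\sum_{j}p_{2g-1}^{*}(a_j)z^{p-j}=0$ by $(v)$, and applying $i_k^{*}$ returns $\alpha=0$. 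When $k\ge g$ the morphism $C^{(k)}\to J(C)$ is a $\pit^{k-g}$-bundle with relative hyperplane class $z_k$ and with Chern data given by the formula of \cite{M}, and Conjecture~\ref{Conj1} for $(C^{(k)},z_k)$ then follows from $(iii)$ by the computation of Lemma~\ref{pbig} (and its extension to all $p$), applied verbatim to this bundle in place of $C^{(2g-1)}=\pit(F)\to J(C)$. The two ranges overlap for $g\le k\le 2g-1$ and cover all $k\ge 1$, which closes the cycle and proves the theorem.
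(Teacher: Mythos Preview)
Your proposal is correct, and for the implications $(i)\Leftrightarrow(v)$, $(ii)\Rightarrow(iii)$, $(i)\Rightarrow(ii)$ and $(v)\Rightarrow(iv)$ it follows the paper's route essentially verbatim: the paper also writes $D=D_0+D_1$ with $D_0$ symmetric ample, invokes K\"unnemann's motivic hard Lefschetz on each $CH^p_{(s)}$, and runs the same minimum-$s$/triangular argument to get $(i)\Rightarrow(ii)$; the reduction $(v)\Rightarrow(iv)$ via $(i_k)_*$ and Collino's formulae is exactly what the paper means by ``whose proof also gives the implication $(v)\Rightarrow(iv)$''.

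The one genuinely different step is your $(iii)\Rightarrow(i)$. The paper does not touch the Fourier transform here: instead it splits into $2p+1\geq g$ and $2p+1<g$, and in each case passes through the symmetric products, using Lemma~\ref{pbig} to obtain Conjecture~\ref{Conj1} for $(C^{(2g-1)},z,g-1-p)$ (hence strong stability in that degree) and then Lemma~\ref{ind} to descend to Beauville's conjecture for $CH^p(J(C))$ via the inverse-limit description of $CH^*(S^\infty C)$ from \cite{KV}. Your Fourier descent is cleaner and more self-contained: since $\Theta\in CH^1_{(0)}$, assumption $(iii)$ gives injections $CH^p_{(s)}\hookrightarrow CH^{g-p}_{(s)}$, and composing with Beauville's Fourier isomorphism $CH^{g-p}_{(s)}\simeq CH^{p+s}_{(s)}$ iterates down to negative codimension, killing all $s<0$ at once without ever leaving $J(C)$. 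The trade-off is that the paper's detour through Lemmas~\ref{pbig} and~\ref{ind} is precisely what underpins the link to symmetric products needed for $(iv)$ and $(v)$; you end up invoking Lemma~\ref{pbig} (in its extended form from the Remark) anyway when handling $(i)\Rightarrow(iv)$ for $k\geq g$, so the paper's approach amortizes that cost, whereas yours isolates a sharper argument for the purely abelian implication $(iii)\Rightarrow(i)$.
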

\begin{proof}
The equivalence between (i) and (v) is given by Prop. \ref{equiv},
whose proof also gives the implication (v)$\Rightarrow$ (iv).

We now prove the implication (i) $\Rightarrow$ (ii). Let $A$ be an
abelian variety of dimension $g$ and $D$ an ample divisor. We
denote by $\sigma: A \to A$ the multiplication by $-1$. Then
$D=D_0+D_1$ with $D_0=(D+\sigma^* D)/2 \in CH^1_{(0)}(A)$ and $D_1
= (D-\sigma^* D)/2 \in CH^1_{(1)}(A)$. One notices that $D_0$ is
an ample symmetric divisor on $A$. By the motivic Hard Lefschetz
theorem of K\"unnemann \cite{K}, the following map $$ \cdot
(D_0)^{g+s-2p}: CH^p_{(s)}(A) \to CH^{g+s-p}_{(s)}(A), 0 \leq 2p-s
\leq g
$$
is an isomorphism. Assume $D^{g-2p} \alpha=0$ for some $\alpha \in
CH^p(A)$. By (i), we can write $\alpha = \sum_{s=0}^p
\alpha_{(s)}$ with $\alpha_{(s)} \in CH^p_{(s)}(A)$. If $\alpha
\neq 0$,  let $j=\min\{s| \alpha_{(s)} \neq 0 \}$, then the
equation $(D_0 + D_1)^{g-2p} \alpha=0$ implies that $(D_0)^{g-2p}
\alpha_{(j)} = 0$. As $j \geq 0$, we get $(D_0)^{g-2p+j}
\alpha_{(j)} = 0$, which implies $\alpha_{(j)} = 0$, contradicting
to the definition of $j$. This proves (ii).

We now prove (iii) $\Rightarrow$ (i). If $2p+1 \geq g$, then by
Lemma \ref{pbig} and Proposition \ref{equiv},  $CH^p_{(s)}(J(C)) =0$
for $s <0$.  If $2p+1 < g$, then $2(g-p-1)+1 >g$. By Lemma
\ref{pbig}, the strong stability holds for $C$ and $g-p-1$. By
Lemma \ref{ind}, Beauville's conjecture holds for $CH^p(J(C))$.
\end{proof}

As Beauville's conjecture holds for abelian varieties of dimension
$\leq 4$ by \cite{B2}, we obtain the following
\begin{Cor}\label{sym}
Let $C$ be a smooth projective curve of genus $\leq 4$ and $k$ a
natural integer. Then Conjecture \ref{Conj1} holds for $(C^{(k)},
z_k)$.
\end{Cor}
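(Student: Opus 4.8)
The plan is to deduce the statement directly from Theorem \ref{main}, using the known cases of Beauville's conjecture. Write $g = g(C)$ for the genus, so that $\dim J(C) = g \le 4$. By the result of Beauville quoted above (\cite{B2}), the vanishing $CH^p_{(s)}(A) = 0$ for $s < 0$ holds for every abelian variety $A$ of dimension at most $4$; applying this to $A = J(C)$ gives statement (i) of Theorem \ref{main}. The equivalence (i) $\Leftrightarrow$ (iv) in that theorem then yields Conjecture \ref{Conj1} for $(C^{(k)}, z_k)$ for all $k \ge 1$, which is exactly the assertion. So the corollary is a formal consequence of the work already done, and there is essentially no obstacle: the substance is entirely contained in Theorem \ref{main}.

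The only points that deserve a word of care are the degenerate low-genus cases, where the distinguished divisor $z = z_{2g-1}$ used in the formulation of Theorem \ref{main} behaves badly. If $g = 0$ then $C \cong \pit^1$ and $C^{(k)} \cong \pit^k$; for an ample divisor $\0(d)$ the map $\cdot D^{k-2p}$ on $CH^p(\pit^k) \cong \qit$ is multiplication by $d^{k-2p} \ne 0$, hence injective, so Conjecture \ref{Conj1} is elementary here. If $g = 1$ then $J(C)$ is an elliptic curve, Beauville's conjecture is vacuous (the only relevant $p$ are $0$ and $1$), and Theorem \ref{main} applies with no change. For $2 \le g \le 4$ one is squarely in the hypotheses of Theorem \ref{main}.

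If anything needs double-checking, it is merely that for each fixed $k$ the pair $(C^{(k)}, z_k)$ appearing in the corollary is the same one occurring in item (iv) of Theorem \ref{main}; this is immediate from the definition of $z_k$ as the class of $\phi_k(C^{(k-1)})$. Hence the proof reduces to a citation of \cite{B2} together with the equivalences established in Theorem \ref{main}.
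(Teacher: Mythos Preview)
Your proof is correct and follows exactly the same route as the paper: Beauville's conjecture is known in dimension $\le 4$ by \cite{B2}, so item (i) of Theorem \ref{main} holds for $J(C)$, and the equivalence (i) $\Leftrightarrow$ (iv) gives the conclusion. Your additional treatment of the degenerate cases $g=0,1$ (where the formulation of Theorem \ref{main} via $C^{(2g-1)}$ and $\Theta$ is awkward or vacuous) is a reasonable bit of extra care that the paper leaves implicit.
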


\begin{Rque}\upshape
(i).  After the first draft of this paper, A. Beauville found another proof of the
implication (iii) $\Rightarrow$ (i) in \cite{B3}.

(ii). In \cite{S}, Soul\'e proved the map in Conjecture
\ref{Conj1} is in fact an isomorphism for abelian varieties
defined over a finite field. In this case, Beauville's conjecture
can also be proved using Frobenius maps (see remark 3 in
\cite{B2}).
\end{Rque}

\section{Murre's conjectures}

In \cite{Mu}, Murre proposed the following conjectures on the
structure of Chow groups of a smooth projective variety $X$ of dimension
$n$:

(A) there exists a Chow-K\"unneth decomposition $\Delta_X=\sum_{i=0}^{2n} \pi_i$.

(B) $\pi_0, \cdots, \pi_{j-1}$ and $\pi_{2j+1}, \cdots, \pi_{2n}$ acts as zero
on $CH^j(X)$.

(C) the filtration defined by $F^v CH^j(X) =Ker(\pi_{2j}) \cap Ker(\pi_{2j-1})
 \cdots \cap Ker(\pi_{2j-v+1})$ is independent of the choice of $\pi_i$'s.

(D) $F^1 CH^j(X) = CH^j(X)_{hom}$.

It has been proved in \cite{J} that Murre's conjectures (for all
$X$) are equivalent to the Bloch-Beilinson conjecture.
Unfortunately these conjectures are very hard to
prove, and only a very few cases are known.

 Let $A$ be a $g$-dimensional complex abelian variety
and $c_0: CH^p_{(0)}(A) \to H^{2p}(A)$ the cycle map. It is known
that Murre's conjecture (A) holds for abelian varieties (see
\cite{Mu} and the references therein). In \cite{B2}, it was
conjectured that $c_0$ is always injective.  The following result
is well-known and is easily deduced from \cite{Mu}.
\begin{Lem}
Let $A$ be an abelian variety. Then Murre's conjectures (B) and (D) hold for
$A$ if and only if Beauville's conjecture holds for $A$ and the
cycle map $c_0: CH^p_{(0)}(A) \to H^{2p}(A)$ is injective for all
$p$.
\end{Lem}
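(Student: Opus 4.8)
The plan is to reduce both of Murre's conjectures (B) and (D) for $A$ to the two statements on the right-hand side, by exploiting the compatibility of the canonical Chow--K\"unneth decomposition of an abelian variety with the Beauville decomposition.

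First I would fix, for $A$ of dimension $g$, the canonical Chow--K\"unneth decomposition $\Delta_A=\sum_{i=0}^{2g}\pi_i$ realizing Murre's conjecture (A), which is known for abelian varieties (see \cite{Mu} and the references therein). The input that makes the Lemma ``easily deduced from \cite{Mu}'' is the well-known fact (Deninger--Murre) that these projectors are adapted to the Beauville decomposition: for every $j$ the correspondence $\pi_i$ acts on $CH^j(A)$ as the projector onto the summand $CH^j_{(2j-i)}(A)$; in particular $\pi_i$ acts as zero on $CH^j(A)$ exactly when $CH^j_{(2j-i)}(A)=0$. I would also recall Beauville's unconditional theorem that $CH^j(A)=\bigoplus_{s=j-g}^{j}CH^j_{(s)}(A)$, so that $CH^j_{(s)}(A)=0$ whenever $s>j$ or $s<j-g$, together with the standard fact --- obtained by comparing the eigenvalue $k^{2j-s}$ of $k^*$ on $CH^j_{(s)}(A)$ with the eigenvalue $k^{2j}$ of $k^*$ on $H^{2j}(A)$ --- that the cycle map annihilates $CH^j_{(s)}(A)$ for every $s\neq 0$.

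Second I would translate conjecture (B). By the dictionary above, for fixed $j$ the projectors $\pi_0,\dots,\pi_{j-1}$ correspond to the components $CH^j_{(s)}(A)$ with $s=2j-i>j$, which vanish by Beauville's theorem, so this half of (B) is automatic; and $\pi_{2j+1},\dots,\pi_{2g}$ correspond to the components with $2j-2g\le s\le -1$, those with $s<j-g$ vanishing automatically and the remaining ones (with $j-g\le s\le -1$) vanishing precisely when Beauville's conjecture holds for $CH^j(A)$. Hence, ranging over all $j$, conjecture (B) for $A$ is equivalent to Beauville's conjecture for $A$. Third I would translate conjecture (D): with $v=1$ the filtration of (C) gives $F^{1}CH^{j}(A)=\ker\bigl(\pi_{2j}|_{CH^{j}(A)}\bigr)=\bigoplus_{s\neq 0}CH^{j}_{(s)}(A)$, since $\pi_{2j}$ is the projector onto $CH^{j}_{(0)}(A)$; on the other hand the annihilation statement recalled above gives $CH^{j}(A)_{hom}=\bigl(\bigoplus_{s\neq 0}CH^{j}_{(s)}(A)\bigr)\oplus\ker\bigl(c_0: CH^{j}_{(0)}(A)\to H^{2j}(A)\bigr)$. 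Therefore $F^{1}CH^{j}(A)=CH^{j}(A)_{hom}$ if and only if $c_0$ is injective on $CH^{j}_{(0)}(A)$, so conjecture (D) for $A$ is equivalent to the injectivity of $c_0$ in all degrees.

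Combining the two translations gives the Lemma. There is essentially no obstacle: the only genuine input is the Deninger--Murre compatibility recalled in the first step, after which the argument is index bookkeeping. The one point worth spelling out carefully is the \emph{decoupling}: (B) and (D) constrain disjoint pieces of the Beauville decomposition --- (B) forces the vanishing of all negative-weight components $CH^{j}_{(s)}(A)$ with $s<0$, while (D) is a condition only on the weight-zero component $CH^{j}_{(0)}(A)$ --- so the conjunction of (B) and (D) is exactly the conjunction of Beauville's conjecture and the injectivity of $c_0$.
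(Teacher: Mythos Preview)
Your proposal is correct and is exactly the standard deduction the paper has in mind: the paper gives no proof, merely noting that the Lemma is ``well-known and is easily deduced from \cite{Mu}'', and the Deninger--Murre compatibility of the canonical Chow--K\"unneth projectors with the Beauville decomposition, followed by the index bookkeeping you carry out, is precisely that deduction. Your separate translations of (B) into Beauville's conjecture and of (D) into the injectivity of $c_0$ are both valid without assuming the other, so the decoupling you emphasize is indeed clean.
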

\begin{Thm}
For abelian varieties, Murre's conjectures (B) and (D) are equivalent to
Conjecture \ref{Conj2}.
\end{Thm}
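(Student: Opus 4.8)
The plan is to invoke the Lemma stated just above, which reduces the Theorem to the assertion that, for an abelian variety $A$ of dimension $g$, Conjecture \ref{Conj2} holds for $(A,D)$ for every ample divisor $D$ if and only if Beauville's conjecture holds for $A$ and the cycle map $c_0\colon CH^p_{(0)}(A)\to H^{2p}(A)$ is injective for all $p$. As in the proof of Theorem \ref{main}, write $D=D_0+D_1$ with $D_0=(D+\sigma^*D)/2\in CH^1_{(0)}(A)$ a symmetric ample divisor and $D_1\in CH^1_{(1)}(A)$, and use K\"unnemann's motivic Hard Lefschetz theorem: $\cdot D_0^{g+s-2p}\colon CH^p_{(s)}(A)\to CH^{g+s-p}_{(s)}(A)$ is an isomorphism when $0\le 2p-s\le g$. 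The elementary observation driving everything is that, for $1\le p\le (g+1)/2$ and any $s\ge 1$, the map $\cdot D_0^{g-2p+1}\colon CH^p_{(s)}(A)\to CH^{g-p+1}_{(s)}(A)$ is automatically injective (it is a smaller power of $D_0$ than the isomorphism $\cdot D_0^{g+s-2p}$), whereas on $\ker(c_0)\subseteq CH^p_{(0)}(A)$ the same map factors as the isomorphism $\cdot D_0^{g-2p}\colon CH^p_{(0)}(A)\xrightarrow{\sim}CH^{g-p}_{(0)}(A)$ (which is compatible with $c_0$ and with Hard Lefschetz on cohomology, hence carries $\ker(c_0)$ isomorphically onto $\ker(c_0)$) followed by one more multiplication by $D_0$.

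For the direction "$\Leftarrow$": granting Beauville and injectivity of $c_0$, one has $CH^p(A)_{hom}=\bigoplus_{s\ge 1}CH^p_{(s)}(A)$. If $\alpha\in CH^p(A)_{hom}$ satisfies $D^{g-2p+1}\alpha=0$ and $\alpha\ne 0$, write $\alpha=\sum_{s\ge j}\alpha_{(s)}$ with $\alpha_{(j)}\ne 0$, $j\ge 1$; since $D_1$ strictly raises the Beauville weight, the weight-$j$ component of $(D_0+D_1)^{g-2p+1}\alpha$ equals $D_0^{g-2p+1}\alpha_{(j)}$, which must vanish, and the observation above forces $\alpha_{(j)}=0$, a contradiction. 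Hence Conjecture \ref{Conj2} holds for $(A,D)$ (the cases $p=0$ and $g<2p-1$ being vacuous). This is the Conjecture \ref{Conj2}-analogue of the implication (i)$\Rightarrow$(ii) in Theorem \ref{main}.

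For the direction "$\Rightarrow$": by Lemma \ref{2Imply1}, Conjecture \ref{Conj2} for $A$ implies Conjecture \ref{Conj1} for $A$, hence, by Theorem \ref{main} (for Jacobians, then propagated to an arbitrary $A$ via isogeny-invariance of Beauville's conjecture and the K\"unneth formula for the Beauville decomposition, writing $A$ up to isogeny as a factor of a Jacobian), Beauville's conjecture holds for $A$. It then remains to prove $c_0$ injective. Set $N^q:=\ker(c_0\colon CH^q_{(0)}(A)\to H^{2q}(A))$. The $c_0$-compatible isomorphism $\cdot D_0^{g-2q}\colon CH^q_{(0)}(A)\xrightarrow{\sim}CH^{g-q}_{(0)}(A)$ gives $N^q\simeq N^{g-q}$, so it suffices to show $N^q=0$ for $q\ge\lceil g/2\rceil$; this is proved by downward induction on $q$. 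The base case $N^g=0$ is the standard fact $CH^g_{(0)}(A)\simeq\mathbb{Q}$. For the inductive step (with $\lceil g/2\rceil\le q\le g-1$ and $N^{q+1}=0$), apply Conjecture \ref{Conj2} to $(A,D_0,g-q)$ — legitimate since $2q+1\ge g$ — which, by the observation above and the already established Beauville conjecture for $A$, amounts exactly to the injectivity of $\cdot D_0\colon N^q\to CH^{q+1}_{(0)}(A)$; as $D_0\cdot N^q\subseteq N^{q+1}=0$, we conclude $N^q=0$. Together with Beauville's conjecture this gives, via the Lemma, Murre's conjectures (B) and (D) for $A$.

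The main obstacle is this last step: extracting injectivity of $c_0$ from Conjecture \ref{Conj2}. The point is that Conjecture \ref{Conj2} carries no information on the positive-weight pieces $CH^p_{(s)}$ with $s\ge 1$ (there it already follows from Hard Lefschetz on cohomology via K\"unnemann), so its entire arithmetic content sits on the homologically trivial weight-$0$ summand, and one must set up the induction on codimension so that the trivial top case $CH^g_{(0)}(A)\simeq\mathbb{Q}$ is fed all the way down. A subsidiary technical point is that Theorem \ref{main} directly yields Beauville's conjecture only for Jacobians, so the reduction of a general abelian variety to a Jacobian factor is needed (or one simply restricts attention to Jacobians, which already suffices for the corollaries on symmetric products).
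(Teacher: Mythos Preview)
Your proof is correct and follows essentially the same route as the paper: both directions rest on K\"unnemann's motivic Hard Lefschetz together with the minimal-weight argument for the implication from Murre to Conjecture~\ref{Conj2}, and an induction on codimension using Conjecture~\ref{Conj2} to kill $\ker(c_0)$ for the converse (the paper runs the induction upward in $p\le g/2$ rather than downward in $q\ge\lceil g/2\rceil$, but via the symmetry $N^p\simeq N^{g-p}$ the two are the same chain). Your explicit reduction from an arbitrary abelian variety to a Jacobian via a surjection $J\twoheadrightarrow A$ is a point the paper simply glosses over when it invokes Theorem~\ref{main}.
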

\begin{proof}
Assume Conjecture \ref{Conj2} for abelian varieties,
then Conjecture \ref{Conj1} also  holds,
which implies Beauville's conjecture by Theorem \ref{main}. We
proceed by induction on $p$ to prove the injectivity of $c_0$. The
case $p=0$ is trivial. Assume we have proved it for $p-1$ with $p
\leq g/2$. Let $D$ be an ample symmetric divisor on $A$. By
\cite{K}, the map $\cdot D^{g-2p+2}: CH^{p-1}_{(0)}(A) \to
CH^{g-p+1}_{(0)}(A)$ is an isomorphism. Together with the Hard
Lefschetz theorem on cohomology, this implies the injectivity of
the map $CH^{g-p+1}_{(0)}(A) \to H^{2g-2p+2}(A).$ Now for any
element $\alpha \in CH^p_{(0)} (A)_{hom}$, the cohomology class of
$\alpha D^{g-2p+1}$ vanishes, which implies that $\alpha
D^{g-2p+1}=0$ in $CH^{g-p+1}_{(0)}(A)$. By Conjecture \ref{Conj2},
this gives $\alpha=0$. In other words, the map $CH^p_{(0)}(A) \to
H^{2p}(A)$ is injective.

Assume Murre's conjecture. Then the injectivity of $c_0$ and
Beauville's conjecture  imply $CH^p(A)_{hom} = \oplus_{s \geq 1}
CH^p_{(s)}(A).$ Now a similar argument as done in the proof of
Theorem \ref{main} implies the injectivity of $\cdot D^{g-2p+1}:
\oplus_{s \geq 1} CH^p_{(s)}(A) \to  \oplus_{s \geq 1}
CH^{g-p+1}_{(s)}(A)$ for any ample divisor $D$ on $A$, concluding
the proof.
\end{proof}

In a similar way as the proof of Lemma \ref{pbig}, one can show
that if Murre's conjectures (B) and (D) hold for a Jacobian
$J(C)$, then Conjecture \ref{Conj2} holds for $(C^{(k)}, z_k)$.
This give the following
\begin{Cor}
Conjecture \ref{Conj2} holds for  $(C^{(k)}, z_k)$ for all curves
$C$ of genus $\leq 3$.
\end{Cor}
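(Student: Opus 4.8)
The plan is to reduce the corollary to Murre's conjectures (B) and (D) for the Jacobian $J(C)$ --- which the preceding Theorem equates with Conjecture \ref{Conj2} for abelian varieties --- and then to transfer from $J(C)$ to its symmetric products by the method of Lemma \ref{pbig}. This is the exact analogue for Conjecture \ref{Conj2} of the way Corollary \ref{sym} was deduced from Theorem \ref{main}.

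First I would check that Murre's conjectures (B) and (D) hold for $J(C)$ whenever $g:=g(C)\le 3$. By the Lemma preceding the Theorem, this reduces for $A=J(C)$ to two things: Beauville's conjecture, and injectivity of the cycle map $c_0:CH^p_{(0)}(A)\to H^{2p}(A)$ for all $p$. Beauville's conjecture is known for abelian varieties of dimension $\le 4$, hence for $J(C)$ with $g\le 3$, by \cite{B1} and \cite{B2}. For injectivity of $c_0$ one only needs $0\le p\le g\le 3$, i.e. $p\in\{0,1,g-1,g\}$: the cases $p=0$ and $p=g$ are trivial since $CH^p_{(0)}(A)\simeq\qit$; the case $p=1$ holds because $CH^1_{(0)}(A)\simeq NS(A)\otimes\qit$ injects into $H^2(A,\qit)$; and the case $p=g-1$ is the injectivity of $CH^{g-1}_{(0)}\to H^{2g-2}$ recorded in \cite{B2} (p.~650) and already used in the proof of Lemma \ref{pbig}. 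Thus Murre's (B), (D) hold for $J(C)$ with $g\le 3$, and by the Theorem Conjecture \ref{Conj2} holds for $(J(C),D)$ for every ample divisor $D$, in particular for a symmetric theta divisor $\Theta$.

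Next I would transfer this to $(C^{(k)},z_k)$, running the computation of Lemma \ref{pbig} in the homologically trivial setting. For $k\ge 2g-1$ the map $p_k:C^{(k)}\to J(C)$ is a projective bundle, $CH(C^{(k)})=CH(J(C))[z_k]$, and any class $\alpha\in CH^p(C^{(k)})_{hom}$ can be written $\alpha=\sum_i p_k^*(a_i)z_k^{p-i}$ with $a_i\in CH^i(J(C))$; imposing $\alpha\cdot z_k^{k-2p+1}=0$ and reducing modulo the minimal polynomial of $z_k$ produces, as in Lemma \ref{pbig}, a linear system in the $a_i$, which after the substitution $b_i=(-1)^*T_c^*a_i$ becomes a system in the $w$-classes. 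Passing to the lowest Beauville-weight component and using the identities $(w_{g-r})_{(0)}=\Theta^{g-r}/(g-r)!$ turns this into a system whose coefficient matrix has nonzero determinant; invoking Conjecture \ref{Conj2} for $(J(C),\Theta)$, established above, then forces the relevant components of the $b_i$ to vanish, hence $\alpha=0$. For $k<2g-1$ one reduces to $(C^{(2g-1)},z)$ via the morphisms $i_k:C^{(k)}\to C^{(2g-1)}$ and the injectivity of $(i_k)_*$ (\cite{C1}), checking that homological triviality is preserved, exactly as in the proof of Proposition \ref{equiv}.

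I expect the main obstacle to lie in this last step. Unlike Lemma \ref{pbig}, where Hard Lefschetz on cohomology was used to kill the leading term $y_0$, here one starts from a homologically trivial class, and homological triviality must be propagated correctly through the projective-bundle computation; this is where one genuinely needs Conjecture \ref{Conj2} for $(J(C),\Theta)$ --- equivalently Murre's (B), (D) for $J(C)$ --- rather than just Conjecture \ref{Conj1}, and the injectivity of $c_0$ is the extra ingredient, beyond Beauville's conjecture, that makes this available. Since $c_0$ is injective for $g\le 3$ but only conjecturally so for $g=4$ (the missing case being $p=g-2$), this is precisely why the corollary stops at genus $3$ whereas Corollary \ref{sym} reaches genus $4$. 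One must also accommodate the extra power of $z_k$ (the exponent $k-2p+1$ in place of $k-2p$), which shifts the index ranges in the linear systems of Lemma \ref{pbig} by one; granting, in the spirit of the Remark after Lemma \ref{pbig}, that these index shifts cause no essential difficulty, the two steps combine to give Conjecture \ref{Conj2} for every $(C^{(k)},z_k)$ with $g(C)\le 3$.
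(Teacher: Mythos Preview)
Your proposal is correct and follows essentially the same route as the paper: the paper's argument is the one-line remark preceding the Corollary, namely that a Lemma~\ref{pbig}-style computation shows Murre's conjectures (B) and (D) for $J(C)$ imply Conjecture~\ref{Conj2} for $(C^{(k)},z_k)$, and then one applies this for $g\le 3$. You have in fact supplied more than the paper does, since you spell out explicitly why Murre's (B) and (D) hold for $J(C)$ with $g\le 3$ (via the injectivity of $c_0$ for $p\in\{0,1,g-1,g\}$), which the paper leaves entirely implicit, and you correctly identify this as the reason the bound drops from genus $4$ to genus $3$.
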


\end{document}